\newtheorem{thm}{Theorem}[section]
\newtheorem{lemma}[thm]{Lemma}
\newtheorem{prop}[thm]{Proposition}
\newtheorem{definition}[thm]{Definition}
\numberwithin{equation}{section}
\newtheorem{remark}[thm]{Remark}
\newcommand{\bF}{\mathbb F}
\newcommand{\bZ}{\mathbb Z}
  \def\Tr{{\rm Tr}}
\newcommand{\<}{\langle}
\renewcommand{\>}{\rangle}
\newcommand{\tup}{\textup}
\begin{document}
\title{ On the irreducible characters of Suzuki $p$-groups  }

\begin{center}
\author{Wendi Di, Tao Feng and Zhiwen He$^*$ }
\end{center}
\address{School of Mathematical Sciences, Zhejiang University, Hangzhou 310027,  China}
\email{Wendyjj@zju.edu.cn}
\address{School of Mathematical Sciences, Zhejiang University, Hangzhou 310027, China}
\email{tfeng@zju.edu.cn}

\address{School of Mathematical Sciences, Zhejiang University, Hangzhou 310027,  China}
\email{zhiwenhe94@163.com}

\begin{abstract}
In this paper, we completely determine the irreducible characters of the four families of Suzuki $p$-groups.
\end{abstract}

\keywords{Suzuki $p$-groups; $VZ$-groups; irreducible characters.\\
{\bf  Mathematics Subject Classification (2010) 20C15, 20D15, 20E45}\\
{\bf  Funding information: National Natural Science Foundation of China under Grant No. 11771392.}\\
$^*$Correspondence author}

\maketitle

\section{Introduction}\label{background}

Suzuki $2$-groups are a class of finite $2$-groups which  possess a cyclic group of automorphisms that permutes their involutions transitively. Higman \cite{Higman} divided all Suzuki $2$-groups into four classes as $A,B,C,D$ type  in $1963$.  The definitions  also make sense for $p$ odd, so we talk about Suzuki $p$-groups for a general prime $p$.
We adopt the notation  in  \cite{Suzuki} and refer to the groups as $A_p(m,\theta)$,  $B_p(m,\theta,\varepsilon)$, $C_p(m,\theta,\varepsilon)$, $D_p(m,\theta,\varepsilon)$.
The structure of  Suzuki $p$-groups has been considered in   \cite{Suzuki, Sagirov1,Sagirov2}. Here we  focus on the characters of Suzuki $p$-groups.
 The character degrees of Suzuki $p$-groups of type $A$ with $p=2$ and $p$ odd are determined in $1999$ and $2003$, separately, by Sagirov  in \cite{Sagirov1, Sagirov2} and his work can be found in chapter $46$ of \cite{Groups of prime power order}. In $2015$, Le et al. \cite{Suzuki} reproved Sagirov's  result on $A_p(m,\theta)$ and  gave the irreducible characters of $A_p(m,\theta)$ by using a different technique.  Besides, they extended their method to $C_p(m,\theta,\varepsilon)$ and determined the character degrees  of  $C_p(m,\theta,\varepsilon)$. However, the paper contains some errors, and the character degrees of the remaining two families of Suzuki $p$-groups have not been determined.

In this paper, we extend the method of Le et al. and  determine the  irreducible characters of Suzuki $p$-groups of all four families completely.
Along the way, we correct some mistakes in \cite{Suzuki} in the determination of irreducible characters of $A_2(m,\theta)$ and $C_2(m,\theta,0)$.

Studying on the characters of Suzuki $p$-groups has important significance in the branch of  combinatorics. There have been many ground-breaking results on some interesting combinatorial objects (such as difference sets, partial difference sets, strong regular graphs, etc.) in  non-abelian groups by using representation and character theory. For example,  one can refer to  \cite{Davis,Gow,Ma} for more detailed information.
We believe deeply that Suzuki $p$-groups has many combinatorial properties. Actually, we have constructed central difference sets and central partial difference sets in the Suzuki $p$-groups of type $A$ by its characters \cite{Di}.

This article is organized as follows. In Section \ref{introduction}, we introduce  the four families of Suzuki $p$-groups and list some  lemmas about characters of finite groups which will be used  later. In Section \ref{main results}, we give our main results on the four families of Suzuki $p$-groups. In Sections \ref{pf_main result}, we prove our main results. In Section \ref{Sec5} we list the explicit expressions of the linear  characters of some abelian groups. Combining them with the results in Section \ref{main results}, the expressions of irreducible characters of Suzuki $p$-group of type $A$ are clear.

At the end of this section, we introduce some notation that we will use in this paper. For a finite group $G$, let $\tup{k}(G)$ denote  the class number of $G$ and use $\tup{Irr}(G)$ for the set of irreducible characters of $G$. Furthermore, we denote  the set of linear characters of $G$, the set of irreducible characters of $G$ of degree $r$ and the set of non-linear irreducible characters of $G$ by $\tup{Lin(G)}$, $\tup{Irr}_{(r)}(G)$ and $\tup{Irr}_{1}(G)$, respectively.
Let $\tup{cd}(G)$ be the set of character degrees of $G$, that is to say, $\tup{cd}(G)=\{\chi(1): \chi\in \tup{Irr}(G)\}$.
 For any positive integer $r$, let $\bF_{p^r}$ be the finite field with $p^r$ elements and  $\tup{Tr}_r$  be the absolute trace function from $\bF_{p^r}$ to $\bF_p$. If $d$ is a divisor of $r$, then let $\tup{Tr}_{r/d}$ be the relative trace function from $\bF_{p^r}$ to $\bF_{p^d}$.
\section{Preliminaries }\label{introduction}

Suppose that $p$ is a prime and $m$ is a positive integer. Write $\bF:=\bF_{p^m}$. In Table \ref{table1}, we list the four families of Suzuki $p$-groups. In the table, $\varepsilon$ is an element of $\bF$, $\theta$ is a field automorphism of $\bF$ such that $a^\theta=a^{p^l}$ for each $a\in\bF$ and some positive integer $l$. Set
\[
n=\gcd(l,m), \; k=m/n,
\]
then $o(\theta)=k$. Let
\[
\bF_{\theta}:=\{x\in\bF: x^\theta=x\},
\]
it is the fixed subfield of $\theta$, and $\bF_\theta=\bF_{p^n}$.
\begin{table}[H]
\caption{The Suzuki $p$-groups of four types}\label{table1}
\begin{tabular}{c|c|c}

\hline
 \textup{type} & \textup{underlying set} &  \textup{multiplication}\\
 \hline
$A_p(m,\theta)$&$\bF\times\bF$&$(a,b)(c,d)=(a+c,b+d+ac^{\theta})$\\
\hline
$B_p(m,\theta,\varepsilon)$&$\bF\times\bF\times\bF$&$(a,b,c)(d,e,f)=(a+d,b+e,c+f+ad^{\theta}+be^{\theta}+\varepsilon ae^{\theta})$\\
\hline
$C_p(m,\theta,\varepsilon)$&$\bF\times\bF\times\bF$&$(a,b,c)(d,e,f)=(a+d,b+e,c+f+ad^{\theta}+be+\varepsilon a^{1/p}{e^p}^{\theta})$\\
\hline
$D_p(m,\theta,\varepsilon)$&$\bF\times\bF\times\bF$&$(a,b,c)(d,e,f)=(a+d,b+e,c+f+ad^{\theta}+be^{\theta^2}+\varepsilon a^{\theta^3}e^{\theta})$\\
\hline
\end{tabular}
\end{table}

\begin{definition}\label{f_a}
 For $\theta\in \tup{Aut}(\bF)$ and  $a\in \bF^{*}$, define
\begin{equation}\label{eq_f_a}
f_{a,\theta}:\bF\rightarrow \bF, \; x\mapsto ax^{\theta}-xa^{\theta}.
\end{equation}
\end{definition}
We observe that $f_{a,\theta}$ is $\bF_\theta$-linear and 
  $\textup{Im}(f_{a,\theta})$ is an $\bF_{\theta}$-hyperplane of $\bF$ if $\theta$ is non-trivial.

 In Table \ref{table_commutator}, we list the commutators of the four families  of Suzuki $p$-groups. The conditions in the last column is to guarantee that the group is non-abelian.
\begin{table}[H]
\caption{The commutators  of Suzuki $p$-groups}\label{table_commutator}
\begin{tabular}{c|c|c}
\hline
 \textup{type} &\textup{commutator}& remark  \\
 \hline
$A_p(m,\theta)$&$[(a,b),(c,d)]=(0,f_{a,\theta}(c))$ &$\theta\neq1$\\
\hline
$B_p(m,\theta,\varepsilon)$&$[(a,b,c),(d,e,f)]=(0,0,f_{a,\theta}(d)+f_{b,\theta}(e)+\varepsilon(ae^{\theta}-db^{\theta}))$&$\theta\neq1$ or $\varepsilon\neq0$\\
\hline
$C_p(m,\theta,\varepsilon)$&$[(a,b,c),(d,e,f)]=(0,0,f_{a,\theta}(d)+\varepsilon(a^{1/p}e^{p\theta}-d^{1/p}b^{p\theta}))$&$\theta\neq1$ or $\varepsilon\neq0$\\
\hline
$D_p(m,\theta,\varepsilon)$&$[(a,b,c),(d,e,f)]=(0,0,f_{a,\theta}(d)+f_{b,\theta^2}(e)+\varepsilon(a^{\theta^3}e^{\theta}-d^{\theta^3}b^{\theta}))$&$\theta\neq1$ or  $\varepsilon\neq0$\\
\hline
\end{tabular}
\end{table}

\begin{lemma}\label{Im(f_a)}
Let $\theta\in \tup{Aut}(\bF)$, $o(\theta)=k>1$, and write $n=m/k$.
Suppose $f_{a,\theta}$ is the map defined in \eqref{eq_f_a}. Then the following holds:
\begin{enumerate}
  \item $\tup{Im}(f_{a,\theta})=\{x\in\bF:\tup{Tr}_{\bF/\bF_{\theta}}((aa^{\theta})^{-1}x)=0\}$, where $\tup{Tr}_{\bF/\bF_{\theta}}$ is the relative trace function from $\bF$ to  $\bF_\theta$.
  \item For $a,b\in\bF^*$, $\tup{Im}(f_{a,\theta})=\tup{Im}(f_{b,\theta})$ if and only if $ab^{-1}\in \bF_{\theta^2}^*$.
\end{enumerate}
\end{lemma}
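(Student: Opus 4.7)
The plan is to prove both parts by using the $\bF_\theta$-linearity of $f_{a,\theta}$ together with the non-degeneracy and $\theta$-invariance of the relative trace $T:=\tup{Tr}_{\bF/\bF_\theta}$.

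For part (1), I would first pin down the $\bF_\theta$-dimension of $\tup{Im}(f_{a,\theta})$ by computing the kernel. Substituting $x=ay$ turns the equation $ax^\theta = xa^\theta$ into $y^\theta = y$, so $\ker(f_{a,\theta}) = a\bF_\theta$ has $\bF_\theta$-dimension $1$, and hence $\dim_{\bF_\theta}\tup{Im}(f_{a,\theta}) = k-1$. Next I would show that the prescribed linear functional $x \mapsto T((aa^\theta)^{-1}x)$ vanishes on the image: for any $x \in \bF$,
\[
T\bigl((aa^\theta)^{-1}(ax^\theta - xa^\theta)\bigr) = T\bigl((x/a)^\theta\bigr) - T(x/a) = 0,
\]
where the last equality uses that $\theta \in \tup{Gal}(\bF/\bF_\theta)$ fixes $T$. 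Since the hyperplane on the right-hand side of (1) also has $\bF_\theta$-codimension one and contains $\tup{Im}(f_{a,\theta})$, the two subspaces must coincide.

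For part (2), I would translate equality of the two hyperplanes into a statement about the defining linear functionals. By part (1), $\tup{Im}(f_{a,\theta}) = \tup{Im}(f_{b,\theta})$ if and only if the $\bF_\theta$-linear functionals $x \mapsto T((aa^\theta)^{-1}x)$ and $x \mapsto T((bb^\theta)^{-1}x)$ have the same kernel, which, by the non-degeneracy of $T$, amounts to them being $\bF_\theta$-proportional. In other words, $(bb^\theta)/(aa^\theta) \in \bF_\theta^*$. Writing $t = b/a$, this condition reads $tt^\theta \in \bF_\theta^*$. Applying $\theta$ gives $t^\theta t^{\theta^2} = tt^\theta$, so $t^{\theta^2} = t$, i.e.\ $t \in \bF_{\theta^2}^*$; the converse is immediate. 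This yields exactly $ab^{-1} \in \bF_{\theta^2}^*$.

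The argument is essentially routine once the right tools are lined up; the only mildly subtle point is the dimension count in part (1) (keeping track of which field the linearity is over) and the tidy passage from $tt^\theta \in \bF_\theta$ to $t \in \bF_{\theta^2}$ in part (2), neither of which should pose a real obstacle.
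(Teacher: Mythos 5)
Your proof is correct and takes essentially the same approach as the paper: both arguments identify $\tup{Im}(f_{a,\theta})$ as an $\bF_\theta$-hyperplane contained in the relative-trace-zero set via a dimension count (the paper by factoring $f_{a,\theta}(x)=aa^\theta f_{1,\theta}(xa^{-1})$ and reducing to $a=1$, you by computing $\ker(f_{a,\theta})=a\bF_\theta$ directly), and part (2) is handled in both by the same hyperplane-proportionality criterion $aa^\theta(bb^\theta)^{-1}\in\bF_\theta^*$ followed by the identical algebraic manipulation.
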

\begin{proof}
\begin{enumerate}
  \item We have
\[
    f_{a,\theta}(x)=aa^\theta((xa^{-1})^\theta-xa^{-1})=aa^\theta f_{1,\theta}(xa^{-1}),
\]
so $\tup{Im}(f_{a,\theta})=aa^\theta\tup{Im}(f_{1,\theta})$ for any $a\in \bF^*$.
Since elements of $\tup{Im}(f_{1,\theta})$ have relative trace $0$ to the subfield $\bF_{\theta}$ and  $\tup{Im}(f_{1,\theta})$  has $\bF_\theta$-dimension $k-1$, we deduce that
\[
  \tup{Im}(f_{1,\theta})=\{x\in\bF:\tup{Tr}_{\bF/\bF_{\theta}}(x)=0\}
\]
 and the first claim follows.
  \item
  It is deduced from $(1)$ that  $\tup{Im}(f_{a,\theta})=\tup{Im}(f_{b,\theta})$ occurs only when \[\{x\in\bF:\tup{Tr}_{\bF/\bF_{\theta}}((aa^{\theta})^{-1}x)=0\} \tup{ and } \{x\in\bF:\tup{Tr}_{\bF/\bF_{\theta}}((bb^{\theta})^{-1}x)=0\}\] are the same $\bF_\theta$-hyperplanes. This is the case  $aa^\theta(bb^{\theta})^{-1}\in\bF_\theta$. So we have
  \[(ab^{-1})^\theta(ab^{-1})^{\theta^2}=ab^{-1}(ab^{-1})^\theta, \,i.e.,\, (ab^{-1})^{\theta^2}=ab^{-1}.\]
\end{enumerate}

\end{proof}
Note that if $\theta$ has order $2$, we have $\bF_{\theta^2}^*=\bF^*$.  So in this case we deduce from Lemma \ref{Im(f_a)} $(2)$ that $\tup{Im}(f_{a,\theta})=\tup{Im}(f_{1,\theta})$ for all $a\in\bF^*$.
Now we describe the derived subgroups and the centers of the four families of Suzuki $p$-groups in Table \ref{table_Z(G)&G'}.

\begin{table}[H]
\caption{The centers  and the derived subgroups }\label{table_Z(G)&G'}
\begin{tabular}{cccc}
\hline
 $G$ & Conditions &$G'$ &$Z(G)$\\
 \hline
\multirow{2}{*}{$A_p(m,\theta)$}& $k=2$& $\{(0,c):c\in\tup{Im}(f_{1,\theta})\}$ &\multirow{2}{*}{$\{(0,c):c\in \bF\}$}\\

&$k>2$& $\{(0,c):c\in \bF\}$ & \\
\hline
\multirow{3}{*}{$B_p(m,\theta,\varepsilon)$}& $\varepsilon=0$ and $k=2$& $\{(0,0,c):c\in\tup{Im}(f_{1,\theta})\}$ &\multirow{3}{*}{$\{(0,0,c):c\in \bF\}$ }\\
&$\varepsilon=0$ and $k>2$& $\{(0,0,c):c\in \bF\}$ & \\
&$\varepsilon\neq0$ & $\{(0,0,c):c\in \bF\}$ & \\

\hline
\multirow{3}{*}{$C_p(m,\theta,\varepsilon)$}& $\varepsilon=0$ and $k=2$& $\{(0,0,c):c\in\tup{Im}(f_{1,\theta})\}$ &$\{(0,b,c):b,c\in \bF\}$ \\
&$\varepsilon=0$ and $k>2$& $\{(0,0,c):c\in \bF\}$ &$\{(0,b,c):b,c\in \bF\}$  \\
&$\varepsilon\neq0$ & $\{(0,0,c):c\in \bF\}$ &$\{(0,0,c):c\in \bF\}$ \\
\hline
\multirow{3}{*}{$D_p(m,\theta,\varepsilon)$}& $\varepsilon=0$ and $k=2$& $\{(0,0,c):c\in\tup{Im}(f_{1,\theta})\}$ &$\{(0,b,c):b,c\in\bF\}$\\
&$\varepsilon=0$ and $k>2$& $\{(0,0,c):c\in \bF\}$ &\multirow{2}{*}{$\{(0,0,c):c\in \bF\}$ } \\
&$\varepsilon\neq0$ & $\{(0,0,c):c\in \bF\}$ & \\
\hline
\end{tabular}
\end{table}
We shall need the following standard facts from character theory.
 \begin{lemma}\cite[Theorem 17.3]{character theory}\label{lem_nonlinearchar}
Let $H$ be a normal subgroup of a finite group $G$. Then there is a bijection between the set $\tup{Irr}(G/H)$ and the set $\{\chi \in\tup{Irr}(G): H\leqslant\tup{Ker}\chi\}$ by  associating each irreducible character of $G/H$ with its lift to $G$.
\end{lemma}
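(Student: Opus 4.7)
The plan is to construct the lifting map explicitly and show it is a bijection by exhibiting an inverse. Given $\chi \in \tup{Irr}(G/H)$ afforded by a representation $\rho\colon G/H \to \GL(V)$, I would define its lift by $\tilde\chi(g) := \chi(gH)$ for $g \in G$, which is the character of the representation $\tilde\rho := \rho \circ \pi$, where $\pi\colon G \to G/H$ is the canonical projection. Since $\pi(h) = 1_{G/H}$ for every $h \in H$, we have $\tilde\rho(h) = \rho(1) = \tup{Id}_V$, so $H \leqslant \tup{Ker}(\tilde\chi)$. Hence the map $\chi \mapsto \tilde\chi$ does land in the target set.

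Next I would verify that $\tilde\chi$ is irreducible. The key observation is that a subspace $W \leqslant V$ is stable under $\tilde\rho(G)$ if and only if it is stable under $\rho(G/H)$, because $\tilde\rho(G) = \rho(\pi(G)) = \rho(G/H)$. Consequently $\tilde\rho$ is an irreducible representation of $G$ precisely when $\rho$ is irreducible as a representation of $G/H$, and $\tilde\chi \in \tup{Irr}(G)$.

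For the inverse direction, suppose $\chi' \in \tup{Irr}(G)$ with $H \leqslant \tup{Ker}\chi'$. Let $\rho'\colon G \to \GL(V)$ be an affording representation; since $H \leqslant \tup{Ker}(\rho')$, the universal property of the quotient yields a well-defined homomorphism $\bar\rho'\colon G/H \to \GL(V)$ with $\rho' = \bar\rho' \circ \pi$. The same invariant-subspace argument shows $\bar\rho'$ is irreducible, so its character $\bar\chi'$ lies in $\tup{Irr}(G/H)$. Define the inverse map by $\chi' \mapsto \bar\chi'$.

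Finally I would check the two assignments are mutually inverse, which is immediate from $\rho = \overline{\rho \circ \pi}$ and $\rho' = (\overline{\rho'}) \circ \pi$ at the level of representations, hence at the level of characters via $\bar{\tilde\chi}(gH) = \tilde\chi(g) = \chi(gH)$ and $\widetilde{\bar\chi'}(g) = \bar\chi'(gH) = \chi'(g)$. There is no genuine obstacle here; the entire content is that a representation factors through a quotient exactly when the corresponding normal subgroup lies in its kernel, and that irreducibility is detected by the same lattice of invariant subspaces on both sides.
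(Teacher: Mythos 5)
Your proposal is correct: the paper does not prove this lemma at all (it is quoted from James--Liebeck, Theorem 17.3), and your argument is precisely the standard lifting proof given there --- factor the representation through $G/H$ and observe that $\tilde\rho(G)=\rho(G/H)$, so the invariant-subspace lattices agree. The only step you pass over silently is the identification $\{g:\chi'(g)=\chi'(1)\}=\Ker(\rho')$ (needed to go from $H\leqslant\tup{Ker}\chi'$ to $H\leqslant\Ker(\rho')$), which holds because $\rho'(g)$ is diagonalizable with root-of-unity eigenvalues, so its trace equals $\dim V$ only when $\rho'(g)$ is the identity; with that remark added the proof is complete.
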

In particular, if we take $H=G'$, then the linear characters of $G$ are exactly the lifts to $G$ of the irreducible characters of $G/G'$. In addition, the number of linear characters of $G$ is $|\tup{Lin}(G)|=|G/G'|$.
It is known that the number of irreducible characters of a finite group $G$ is equal to the class number of  $G$. So we can deduce that
\begin{equation}\label{k(G)&IRR(G)}
 | \tup{Irr}_{1}(G)|=\tup{k}(G)-|G/G'|,
\end{equation}
where we recall that $\tup{Irr}_{1}(G)$ stands for the set of all non-linear irreducible characters.

In \cite{Lewis(VZ)}, Lewis call a finite group $G$  a \emph{$VZ$-group} if all its non-linear irreducible characters vanish on $G\setminus Z(G)$, that is, $\chi(g)=0$ for all $g\in G\setminus Z(G)$ and all $\chi\in \tup{Irr}_1(G)$.
By \cite[Lemma 2.2]{Lewis(GC)},  a finite non-abelian group $G$ is a $VZ$-group if and only if $[g, G]=G'$ for each $g\notin Z(G)$.
The next theorem  gives the character degrees and the non-linear irreducible characters of a $VZ$-group.
\begin{thm}\cite[Theorem 7.5]{character theory  Huppert}\label{main theorem}
Let $G$ be a $VZ$-group, then
$|G/Z(G)|=m^2$ for some positive integer $m$ and $\tup{cd}(G)=\{1,m\}$. Furthermore, there are $|Z(G)|-|Z(G)/G'|$ distinct non-linear irreducible characters
and all of them can be described by
\[
\chi_m^\lambda(g)=\begin{cases}m\lambda(g),&\tup{ if }g\in Z(G),\\ \quad0,& \tup{    otherwise},\end{cases}
\]
where $\lambda\in \tup{Irr}(Z(G))$, but $\lambda|_{G'}\neq1$.
\end{thm}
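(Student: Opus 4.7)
My plan is to exploit the defining property of a $VZ$-group: every non-linear irreducible character vanishes outside $Z(G)$. Combined with Schur's lemma---which shows that on the center every irreducible character has modulus equal to its degree---this will force each non-linear irreducible to be determined by its central character. I will first pin down the common degree $m$, then establish $G'\leqslant Z(G)$, then count the non-linear irreducibles, and finally identify them explicitly.

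\textbf{Degrees and count.} Let $\chi$ be a non-linear irreducible of degree $d$. By Schur's lemma there is a $\lambda_\chi\in\tup{Irr}(Z(G))$ with $\chi(g)=d\lambda_\chi(g)$ for $g\in Z(G)$, so $|\chi(g)|=d$ on $Z(G)$. The $VZ$ hypothesis together with the first orthogonality relation gives
\[
1=\langle\chi,\chi\rangle=\frac{1}{|G|}\sum_{g\in Z(G)}d^2=\frac{d^2\,|Z(G)|}{|G|},
\]
whence $d^2=|G/Z(G)|$; call this common value $m^2$, so $\tup{cd}(G)=\{1,m\}$. Writing $N$ for the number of non-linear irreducibles, $\sum_\chi\chi(1)^2=|G|$ gives $|G/G'|+Nm^2=|G|$, hence $N=|Z(G)|-|Z(G)/G'|$, provided that $G'\leqslant Z(G)$ so that $|Z(G)/G'|$ makes sense.

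\textbf{Why $G'\leqslant Z(G)$.} For $g\notin Z(G)$, the vanishing of the non-linear irreducibles together with column orthogonality yields
\[
|C_G(g)|=\sum_{\chi\in\tup{Lin}(G)}|\chi(g)|^2=|G/G'|,
\]
the last step because linear characters are inflated from the abelian quotient $G/G'$. Equivalently, the conjugacy class of $g$ has size $|G'|$ and equals the coset $gG'$. If some $g_0\in G'\setminus Z(G)$ existed, its class would be $g_0G'=G'$; but $1\in G'$ is central, forcing $G'=\{1\}$ and contradicting non-abelianness. So $G'\leqslant Z(G)$, and the value of $N$ above is valid.

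\textbf{Explicit form.} By the degree step, each non-linear irreducible $\chi$ agrees with $\chi_m^{\lambda_\chi}$ on $Z(G)$ and vanishes elsewhere, so $\chi=\chi_m^{\lambda_\chi}$. Since $\chi$ is non-linear, $G'\not\leqslant\tup{Ker}\chi$; combined with $G'\leqslant Z(G)$ this forces $\lambda_\chi|_{G'}\neq 1$. The assignment $\chi\mapsto\lambda_\chi$ is injective (one recovers $\chi$ from its restriction to $Z(G)$), and its codomain $\{\lambda\in\tup{Irr}(Z(G)):\lambda|_{G'}\neq 1\}$ has cardinality $|Z(G)|-|Z(G)/G'|=N$. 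Counting makes this a bijection, proving the description. The main obstacle I anticipate is avoiding circularity: one is tempted to first exhibit the $\chi_m^\lambda$ as actual characters (say by inducing an extension of $\lambda$ from an intermediate subgroup of index $m$) and then count, but this requires extra structural input. My plan sidesteps this by first establishing all constraints (degree, support, $G'\leqslant Z(G)$, count) and only then identifying the $\chi_m^\lambda$ as characters via the counting bijection.
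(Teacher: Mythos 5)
Your proof is correct. Note that the paper does not prove this statement at all --- it is quoted from Huppert's \emph{Character Theory of Finite Groups} (Theorem 7.5) and used as a black box --- so there is no internal argument to compare against; your orthogonality-based proof (degree from $\langle\chi,\chi\rangle=1$ and vanishing off $Z(G)$, $G'\leqslant Z(G)$ via column orthogonality and the class-equals-coset observation, then the injection $\chi\mapsto\lambda_\chi$ turned into a bijection by counting) is the standard one and is complete.
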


\begin{lemma}\label{lem_Vz}
 Write $o(\theta)=k$. In Table \ref{table_VZ},  we list the cases where the Suzuki $p$-group is a $VZ$-group.
  \begin{table}[H]
\caption{Cases where the Suzuki $p$-group is a $VZ$-group}\label{table_VZ}
\begin{tabular}{c|c}
\hline
 \textup{type} &\textup{condition}  \\
 \hline
$A_p(m,\theta)$&$k=2$\\

$B_p(m,\theta,\varepsilon)$&$\varepsilon\neq0$ or $\varepsilon=0,k=2$\\

$C_p(m,\theta,\varepsilon)$&$\varepsilon\neq0$ or $\varepsilon=0,k=2$\\

$D_p(m,\theta,\varepsilon)$&$\varepsilon\neq0$ or $\varepsilon=0,k=2$\\
\hline
\end{tabular}
\end{table}
\end{lemma}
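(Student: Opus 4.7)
The plan is to use the standard criterion recalled just before Theorem \ref{main theorem}: a finite non-abelian group $G$ is a $VZ$-group if and only if $[g,G]=G'$ for every $g\notin Z(G)$. Combined with the commutator formulas in Table \ref{table_commutator} and the descriptions of $G'$ and $Z(G)$ in Table \ref{table_Z(G)&G'}, I would proceed family by family, showing for each listed case that every non-central $g$ satisfies $[g,G]=G'$ and, for each remaining case, exhibiting a single non-central $g$ with $[g,G]\subsetneq G'$.

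First I would dispose of the $k=2$ cases uniformly. Since $\theta^2=\tup{id}$ forces $\bF_{\theta^2}^*=\bF^*$, Lemma \ref{Im(f_a)}(2) gives $\tup{Im}(f_{a,\theta})=\tup{Im}(f_{1,\theta})$ for every $a\in\bF^*$, and this image equals the nontrivial coordinate of $G'$ in all four families. Thus for $A_p(m,\theta)$ with $k=2$, any non-central $g=(a,b)$ has $a\neq0$ and $[g,G]=\{0\}\times\tup{Im}(f_{a,\theta})=G'$; the same reduction handles the $\varepsilon=0$, $k=2$ rows of $B_p,C_p,D_p$, where a non-central element has a non-zero coordinate among $\{a,b\}$ and the corresponding $f$-image covers $G'$ entirely.

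For types $B_p,C_p,D_p$ with $\varepsilon\neq0$ one has $G'=Z(G)=\{(0,0,c):c\in\bF\}$, and non-central elements are exactly those with $(a,b)\neq(0,0)$; the $\varepsilon$-term provides the freedom needed to force $[g,G]=\bF$. For $C_p$, setting $d=0$ when $a\neq0$ reduces the third coordinate of the commutator to $\varepsilon a^{1/p}e^{p\theta}$, which is a bijection of $\bF$; setting $e=0$ when $a=0\neq b$ gives $-\varepsilon d^{1/p}b^{p\theta}$, also a bijection. Type $D_p$ is handled analogously using $\varepsilon a^{\theta^3}e^\theta$ (when $b=0$) or $-\varepsilon d^{\theta^3}b^\theta$ (when $a=0$), with a more delicate argument for $a,b\in\bF^*$. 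For type $B_p$, one-variable reductions work when $a=0$ or $b=0$; the sub-case $a,b\in\bF^*$ requires a more involved analysis based on the $\bF_\theta$-bilinear alternating pairing
\[
\Phi\bigl((a,b),(d,e)\bigr):=f_{a,\theta}(d)+f_{b,\theta}(e)+\varepsilon\bigl(ae^\theta-db^\theta\bigr),
\]
whose radical on $\bF^2$ one would check is trivial when $\varepsilon\neq0$, followed by a dimension count identifying the image of the evaluation $(d,e)\mapsto\Phi((a,b),(d,e)):\bF_\theta^{2k}\to\bF_\theta^{k}$ with all of $\bF$.

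For the remaining cases I would exhibit explicit witnesses: for $A_p(m,\theta)$ with $k>2$, the element $g=(1,0)$ gives $[g,G]=\{0\}\times\tup{Im}(f_{1,\theta})$, a proper $\bF_\theta$-hyperplane of $G'$; for the $\varepsilon=0$, $k>2$ rows of $B_p,C_p,D_p$, elements such as $g=(1,0,0)$ (types $C,D$) and $g=(1,1,0)$ (type $B$) produce commutators lying in $\tup{Im}(f_{1,\theta})\subsetneq\bF$. The main obstacle I foresee is the $a,b\in\bF^*$ sub-case of type $B_p$ with $\varepsilon\neq0$ (and the parallel piece of $D_p$): neither single-variable substitution is automatically surjective, and non-degeneracy of $\Phi$ alone does not immediately imply that each individual evaluation map is surjective, so this step requires a careful polynomial-identity argument that exploits the alternation of $\Phi$ together with the specific structure of the $f_{a,\theta}$ and the $\varepsilon$-term.
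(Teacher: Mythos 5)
Your overall strategy coincides with the paper's: both arguments use the criterion that a finite non-abelian $G$ is a $VZ$-group if and only if $[g,G]=G'$ for every $g\notin Z(G)$, combined with the commutator formulas of Table \ref{table_commutator}, the descriptions of $G'$ and $Z(G)$, and Lemma \ref{Im(f_a)}(2) for the $k=2$ cases. Your handling of the $k=2$ rows, of type $C$ with $\varepsilon\neq0$ (where the single-variable substitution genuinely works because $e\mapsto \varepsilon a^{1/p}e^{p\theta}$ is a bijection of $\bF$), and your explicit witnesses showing that the unlisted cases are \emph{not} $VZ$-groups (a converse the paper's proof does not spell out) are all correct.

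There is, however, a genuine gap at the decisive point: for $B_p(m,\theta,\varepsilon)$ and $D_p(m,\theta,\varepsilon)$ with $\varepsilon\neq0$ and $a,b\in\bF^*$, you never actually prove that the additive map $(d,e)\mapsto f_{a,\theta}(d)+f_{b,\theta}(e)+\varepsilon(ae^{\theta}-db^{\theta})$ (respectively its type-$D$ analogue) is surjective onto $\bF$; you only announce that a ``careful polynomial-identity argument'' would be needed. Since the lemma for these two families turns precisely on this sub-case, the proposal is incomplete as written. For comparison, the paper disposes of it in one line by observing that already $\{ae^{\theta}-db^{\theta}: d,e\in\bF\}=\bF$ whenever $(a,b)\neq(0,0)$ and concluding $[(a,b,c),G]=G'$. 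Your instinct that surjectivity of the $\varepsilon$-part alone does not formally force surjectivity of the sum is sound (the sum of two additive maps with images $H$ and $\bF$ can have a proper image), so to close the argument you must either justify that inference or carry out the count you allude to, e.g.\ show that the equation $f_{a,\theta}(d)+f_{b,\theta}(e)+\varepsilon(ae^{\theta}-db^{\theta})=0$ has exactly $p^{m}$ solutions $(d,e)\in\bF^2$, equivalently $|C_G(g)|=|G|/|G'|$; as it stands, that step is missing.
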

\begin{proof}
  We first consider the case $G=A_p(m,\theta)$. Recall that when $k=2$, it follows from Table \ref{table_Z(G)&G'} that
  \[
  G'=\{(0,b):b\in \tup{Im}(f_{1,\theta})\},\,Z(G)=\{(0,b):b\in\bF\}.\]
  For each $(a,b)\in G\setminus Z(G)$, we have $[(a,b),G]=\{(0,x):x\in\tup{Im}(f_{a,\theta})\}=G'$ by Table \ref{table_commutator} and Lemma $\ref{Im(f_a)} \,(2)$. So $G$ is a $VZ$-group.

  Next we  consider the case $G=B_p(m,\theta,\varepsilon)$. Recall that when $\varepsilon\neq 0$, we have $Z(G)=G'=\{(0,0,c):c\in\bF\}$  by Table \ref{table_Z(G)&G'}.
  For each $(a,b,c)\in G\setminus Z(G)$, we have
   \[[(a,b,c),G]=\{(0,0,f_{a,\theta}(d)+f_{b,\theta}(e)+\varepsilon(ae^\theta-db^\theta): d,e\in\bF\}\] by Table \ref{table_commutator}.
  When $a\neq 0$ or $b\neq 0$, it follows that \[\{ae^\theta-db^\theta: b,d\in\bF\}=\bF. \]
   So we have $[(a,b,c),G]=G'$ and $G$ is a $VZ$-group.
  When $\varepsilon=0$, this is the case similar to $G=A_p(m,\theta)$ and the claim follows similarly.
  As for the remaining two cases, they are similar to $G=B_p(m,\theta,\varepsilon)$ and we omit  the details.
\end{proof}
In the remaining of this section, we only consider the cases where the Suzuki $p$-groups are not  $VZ$-groups.

  \begin{lemma}\label{lem fa&fb^2}
    Suppose $\gamma$ is  a primitive element  of $\bF^*$. Let $a,b\in\bF^*$ and take $\theta\in\tup{Aut}(\bF)$ of order $k>2$. Then the following holds:
    \begin{enumerate}
     \item When $k$ is odd, for each $a\in \bF^*$, there exists  $b\in \bF^*$ such that $\tup{Im}(f_{b,\theta^2})\subseteq\tup{Im}(f_{a,\theta})$, and here are $p^n-1$ such $b$'s.
       \item   When  $k/2$ is odd, for each $a\in \bF^*$, there exists  $b\in \bF^*$ such that $\tup{Im}(f_{b,\theta^2})\subseteq\tup{Im}(f_{a,\theta})$, and here are  $p^{2n}-1$ such $b$'s.
      \item When $k/2$ is even, there exists some $b\in \bF^*$ such that $\tup{Im}(f_{b,\theta^2})\subseteq\tup{Im}(f_{a,\theta})$ if and only if $a\in \langle \gamma^{\frac{p^{2n}+1}{\gcd(2,p-1)}}\rangle$, and here are $p^{4n}-1$ such $b$'s if this is the case.
    \end{enumerate}
  \end{lemma}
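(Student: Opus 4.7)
The plan is to first translate the geometric containment $\tup{Im}(f_{b,\theta^2})\subseteq\tup{Im}(f_{a,\theta})$ into a multiplicative condition on $w := bb^{\theta^2}/(aa^\theta)$ in $\bF^*$, and then enumerate the admissible $b$ via the cyclic structure of $\bF^*$. By Lemma~\ref{Im(f_a)}(1) I can write $\tup{Im}(f_{a,\theta})=(aa^\theta)K_\theta$ and $\tup{Im}(f_{b,\theta^2})=(bb^{\theta^2})K_{\theta^2}$, where $K_\sigma:=\ker\tup{Tr}_{\bF/\bF_\sigma}$ for $\sigma\in\{\theta,\theta^2\}$, so the containment is equivalent to $wK_{\theta^2}\subseteq K_\theta$. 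Equipping $\bF$ with the non-degenerate $\bF_p$-bilinear pairing $\langle x,y\rangle=\tup{Tr}_{\bF/\bF_p}(xy)$, transitivity of trace gives $K_\theta^{\perp}=\bF_\theta$ and $K_{\theta^2}^{\perp}=\bF_{\theta^2}$; dualising and using $\bF_\theta\subseteq\bF_{\theta^2}$ turns the containment into the clean condition $w\in\bF_{\theta^2}$. I am thus reduced to counting $b\in\bF^*$ with $bb^{\theta^2}\in(aa^\theta)\bF_{\theta^2}^*$.

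For the counting I analyse the group homomorphism $\phi:\bF^*\to\bF^*/\bF_{\theta^2}^*$, $b\mapsto\overline{b^{1+p^{2l}}}$. Writing $N:=|\bF_{\theta^2}|-1$ and $c:=(p^m-1)/N$, so $\bF^*/\bF_{\theta^2}^*\cong\bZ/c\bZ$, a direct computation in this cyclic quotient shows $|\ker\phi|=N\cdot D$ and $[\bF^*/\bF_{\theta^2}^*:\tup{Im}\phi]=D$, where $D:=\gcd(1+p^{2l},c)$. Consequently the desired count equals $N\cdot D$ precisely when $\overline{aa^\theta}\in\tup{Im}\phi$, and is $0$ otherwise.

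The remaining arithmetic work is to compute $D$ in the three parity regimes. For any prime $r\mid D$ and $e:=\tup{ord}_r(p)$, the condition $r\mid 1+p^{2l}$ forces $v_2(e)=v_2(l)+2$ whenever $e\nmid 2n$, while $r\mid c$ forces $e\mid m=kn$. Combining these with $n\mid l$ (so $v_2(n)\le v_2(l)$) and the parity of $k$ yields $D=1$ in Cases~(1) and~(2), and $D=p^{2n}+1$ in Case~(3) (where $j:=l/n$ is odd because $4\mid k$ and $\gcd(j,k)=1$). In Cases~(1) and~(2) the map $\phi$ is then surjective, giving $|\ker\phi|=p^n-1$ and $p^{2n}-1$ respectively. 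In Case~(3), translating $\overline{aa^\theta}\in\tup{Im}\phi$ into exponents via $a=\gamma^s$ amounts to $(p^{2n}+1)\mid s(1+p^l)$, and the key auxiliary identity $\gcd(1+p^l,p^{2n}+1)=\gcd(2,p-1)$ (proved by reducing $1+p^l$ modulo $p^{2n}+1$ to $1\pm p^n$ and using $(p^n-1)(p^n+1)\equiv -2\pmod{p^{2n}+1}$) converts this into the stated membership $a\in\langle\gamma^{(p^{2n}+1)/\gcd(2,p-1)}\rangle$, with $|\ker\phi|=(p^{2n}-1)(p^{2n}+1)=p^{4n}-1$ solutions.

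The main hurdle is the gcd computation in Case~(3), including the identity $\gcd(1+p^l,p^{2n}+1)=\gcd(2,p-1)$; both rely on pinning down the multiplicative order of $p$ modulo $p^{2n}+1$ and tracking the parity of $j$. The perp-duality reduction in the first paragraph is essentially formal, but it sets up everything cleanly so that the three cases are distinguished only by a single gcd.
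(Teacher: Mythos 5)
Your proposal is correct, and it reaches the same arithmetic core as the paper by a genuinely different route. The paper expands the trace condition $\tup{Tr}_{\bF/\bF_\theta}\bigl((aa^\theta)^{-1}(bx^{\theta^2}-xb^{\theta^2})\bigr)=0$ directly into the multiplicative identity $b^{\theta^4}b^{-1}=(aa^\theta)^{\theta^2}(aa^\theta)^{-1}$, decides solvability by computing the image of the power map $b\mapsto b^{p^{4l}-1}$ as $\langle\gamma^{p^d-1}\rangle$ with $d=\gcd(4l,m)$, and counts the solutions $b$ geometrically, via the observation that $\tup{Im}(f_{a,\theta})$ contains at most one $\tup{Im}(f_{b,\theta^2})$ together with Lemma \ref{Im(f_a)}(2), which gives $|\bF_{\theta^4}^*|$ solutions. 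You instead use trace duality ($K_\sigma^\perp=\bF_\sigma$) to recast the containment as the cleaner condition $bb^{\theta^2}(aa^\theta)^{-1}\in\bF_{\theta^2}^*$ (I checked this is equivalent to the paper's identity: setting $u=bb^{\theta^2}$, $v=aa^\theta$, the relation $(u/v)^{\theta^2}=u/v$ unwinds to $b^{\theta^4}b^{-1}=v^{\theta^2}v^{-1}$), and then count fibers of the homomorphism $\phi:b\mapsto\overline{b^{1+p^{2l}}}$ into $\bF^*/\bF_{\theta^2}^*$; this replaces the paper's geometric counting step by the purely group-theoretic fact $|\ker\phi|=ND$, and in all three cases $ND$ agrees with $|\bF_{\theta^4}^*|$. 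The solvability criterion in Case (3) then reduces, exactly as in the paper, to $(p^{2n}+1)\mid s(p^l+1)$ and the identity $\gcd(p^l+1,p^{2n}+1)=\gcd(2,p-1)$ (which you prove correctly using that $l/n$ is odd when $4\mid k$). Your determination of $D$ via $2$-adic valuations of $\tup{ord}_r(p)$ checks out in all three parity regimes, so the argument is complete; if anything, the duality reformulation buys a slightly more transparent existence condition, at the cost of an extra (formal) orthogonality computation.
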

  \begin{proof}

  Suppose that $\tup{Im}(f_{b,\theta^2})\subseteq\tup{Im}(f_{a,\theta})$ for some $b\in\bF^*$. This is the case if and only if
  \[
  \tup{Tr}_{\bF/\bF_\theta}((aa^\theta)^{-1}(bx^{\theta^2}-xb^{\theta^2}))=0,\, \forall x\in \bF,
  \]  where recall that $\tup{Im}(f_{a,\theta})=\{x\in\bF:\tup{Tr}_{\bF/\bF_{\theta}}((aa^{\theta})^{-1}x)=0\}$ from Lemma \ref{Im(f_a)} $(1)$.
That is, \[\tup{Tr}_{\bF/\bF_\theta}(((aa^\theta)^{-1}b)^{\theta^{-2}}-(aa^\theta)^{-1}b^{\theta^2})x)=0, \forall x\in \bF.\] It follows that
\[((aa^\theta)^{-1}b)^{\theta^{-2}}=(aa^\theta)^{-1}b^{\theta^2},\, i.e.,\, b^{\theta^4}b^{-1}=a^{\theta^3}a^{\theta^2}(aa^\theta)^{-1}.\]
Write $x^\theta=x^{p^l}$ and set $d:=\gcd(4l,m)$. Observe that $\{b^{p^{4l}-1}: b\in\bF^*\}$ is  a multiplicative subgroup of $\bF^*$ of order
\[\frac{p^m-1}{\gcd(p^{4l-1},p^m-1)}=\frac{p^m-1}{p^d-1},\]
so $\{b^{p^{4l}-1}: b\in\bF^*\}=\langle\gamma^{p^d-1}\rangle$.
We conclude that there exists some $b\in\bF^*$ such that $\tup{Im}(f_{b,\theta^2})\subseteq\tup{Im}(f_{a,\theta})$ if and only if
\begin{equation*}
  a^{p^{3l}+p^{2l}-p^l-1}=a^{(p^l+1)(p^{2l}-1)}\in  \langle\gamma^{p^d-1}\rangle.
\end{equation*}
It implies that
\begin{equation}\label{eq_ImfbIm(fa)}
  p^d-1 \mid s(p^l+1)(p^{2l}-1),
\end{equation}
where $a=\gamma^s$.
We now consider the three cases one by one.
\begin{enumerate}
  \item In the case $d=n$, $k$ is odd. We have \eqref{eq_ImfbIm(fa)} holds for any $a\in\bF^*$ since $p^n-1 \mid p^{2l}-1$. Besides, as $\tup{Im}(f_{a,\theta})$ can not contain two distinct $\tup{Im}(f_{b,\theta^2})$'s,  the number of $b$'s such that $\tup{Im}(f_{b,\theta^2})\subseteq\tup{Im}(f_{a,\theta})$  is equal to $|\bF_{\theta^4}^*|$ by Lemma \ref{Im(f_a)} $(2)$.
      Since $o(\theta^4)=k$, we have $\bF_{\theta^4}^*=\bF_{p^n}^*$. So there are $p^n-1$ such $b$'s.
  \item In the case $d=2n$, $k/2$ is odd. Similarly, \eqref{eq_ImfbIm(fa)} holds for any $a\in\bF^*$ as $p^{2n}-1 \mid p^{2l}-1$.
        Besides there are $p^{2n}-1$ such $b$'s since $\bF_{\theta^4}^*=\bF_{p^{2n}}^*$ in this case.
  \item In the case $d=4n$, $k/2$ is even and \eqref{eq_ImfbIm(fa)} is equivalent to $ p^{2n}+1 \mid s(p^l+1)$.
      Write $l=nr$. Since $\gcd(k,r)=1$, we deduce that $r$ is odd.
       Then we have \[\gcd(p^{2n}+1,p^{l}+1)=\gcd(2,p-1).\]
So \eqref{eq_ImfbIm(fa)} holds if and only if $\frac{p^{2n}+1}{\gcd(2,p-1)} \mid s$, that is, $a=\gamma^s\in \< \gamma^{\frac{p^{2n}+1}{\gcd(2,p-1)}}\>$.
Since $o(\theta^4)=k/4$ in this case, we have $\bF_{\theta^4}^*=\bF_{p^{4n}}^*$. So there are $p^{4n}-1$ such $b$'s.
\end{enumerate}
  \end{proof}

\begin{lemma} \label{k(G)of four types}
The class number of the  Suzuki $p$-groups that are not $VZ$-groups  is given in Table \ref{table_k(G)}.
\begin{table}[H]
\begin{center}
\caption{The class number}\label{table_k(G)}
\begin{tabular}{c|c|c}
\hline
 $G$ & Condition&$\tup{k}(G)$  \\
 \hline
$A_p(m,\theta)$& &$p^m+ p^{n}(p^m-1)$\\
\hline
\multirow{2}{*}{$B_p(m,\theta,0)$}& $k>1$ odd&$p^{2m}+ p^{2n}(p^m-1)$\\
\cline{2-3}
&$k>2$ even&$p^{2m}+ p^{n}(p^m-1)(p^{2n}-p^n+1)$\\
\hline
$C_p(m,\theta,0)$&$k>2$&$p^{2m}+p^{m+n}(p^m-1)$\\
\hline
\multirow{4}{*}{$D_p(m,\theta,0)$}&$k>1$ odd&$p^{2m}+ p^{2n}(p^m-1)$\\
\cline{2-3}
& $k$ even and $k/2>1$ odd&$p^{2m}+ p^{3n}(p^m-1)$\\
\cline{2-3}
&\multirow{2}{*}{$k/2$ even} &$2^{2m}+ 2^{3n}(2^m-1)$ or \\
&&$p^{2m}+(p^m-1)(2p^{3n}-p^{2n}-p^n+1)$ \\
\hline
\end{tabular}
\end{center}
\end{table}
\end{lemma}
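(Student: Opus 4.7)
All four families of Suzuki $p$-groups considered in the lemma are of nilpotency class two with $G'\subseteq Z(G)$. Hence for every $g\in G$ the map $h\mapsto [g,h]$ is a homomorphism $G\to Z(G)$ whose image $[g,G]$ is a subgroup of $Z(G)$, and the conjugacy class of $g$ coincides with the coset $g[g,G]$. In particular $[g,G]$ depends only on $gZ(G)$, and the number of conjugacy classes contained in $gZ(G)$ is $|Z(G)|/|[g,G]|$. Consequently
\[
\tup{k}(G) \;=\; |Z(G)| \;+\; \sum_{gZ(G)\neq Z(G)} \frac{|Z(G)|}{|[g,G]|}.
\]
My plan is to evaluate the right-hand side by grouping the non-central cosets according to the ``shape'' of $[g,G]$, reading off the commutator from Table \ref{table_commutator} and using Lemmas \ref{Im(f_a)} and \ref{lem fa&fb^2} to compute $|[g,G]|$.

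The type $A$ and type $C$ cases (with $\varepsilon=0$) are the warm-up: $[g,G]$ is determined by the single coordinate $a$, and for $a\neq 0$ one has $|[g,G]|=|\tup{Im}(f_{a,\theta})|=p^{m-n}$ by Lemma \ref{Im(f_a)}(1). For type $A$ this gives $p^n$ classes per non-central coset and $p^m + p^n(p^m-1)$ in total, and for type $C$ the extra $b$-direction enlarges $|Z(G)|$ to $p^{2m}$ and produces $p^{2m}+p^{m+n}(p^m-1)$.

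For types $B$ and $D$ with $\varepsilon=0$ the non-central cosets are indexed by pairs $(a,b)\neq(0,0)$, and the key quantity is $|\tup{Im}(f_{a,\theta})+\tup{Im}(f_{b,\theta'})|$, where $\theta'=\theta$ for $B$ and $\theta'=\theta^2$ for $D$. I would partition into the subcases $(a\neq 0,\,b=0)$, $(a=0,\,b\neq 0)$, and $(a,b\neq 0)$. In the first two, $|[g,G]|$ is given directly by Lemma \ref{Im(f_a)}(1). In the third, the sum is either the whole field $\bF$ or equals $\tup{Im}(f_{a,\theta})$; the latter happens exactly when $\tup{Im}(f_{b,\theta'})\subseteq \tup{Im}(f_{a,\theta})$. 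For type $B$ the two images share the same $\bF_\theta$-dimension, so inclusion means equality and Lemma \ref{Im(f_a)}(2) counts the compatible $b$'s as $p^n-1$ or $p^{2n}-1$ according to the parity of $k$. For type $D$ with $k$ odd or $k/2$ odd, Lemma \ref{lem fa&fb^2}(1)--(2) yields the counts $p^n-1$ and $p^{2n}-1$ respectively. Summing the four contributions and adding $|Z(G)|=p^m$ then gives the stated formulas by elementary arithmetic.

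The main obstacle is type $D$ with $k/2$ even. Now $\tup{Im}(f_{a,\theta})$ and $\tup{Im}(f_{b,\theta^2})$ have different $\bF_{p^n}$-dimensions ($k-1$ and $k-2$), and by Lemma \ref{lem fa&fb^2}(3) the inclusion $\tup{Im}(f_{b,\theta^2})\subseteq\tup{Im}(f_{a,\theta})$ happens only when $a$ lies in the subgroup $L:=\<\gamma^{(p^{2n}+1)/\gcd(2,p-1)}\>$, and then for exactly $p^{4n}-1$ values of $b$. The cardinality $|L|=\gcd(2,p-1)(p^m-1)/(p^{2n}+1)$ depends on the parity of $p$; combined with the factor $(p^n-1)$ coming from the difference of ``inclusion'' and ``non-inclusion'' contributions, this forces the case split between $p=2$ and $p$ odd in the final row. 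After substituting $|L|$ and simplifying $|L|(p^{4n}-1)=\gcd(2,p-1)(p^m-1)(p^{2n}-1)$, both expressions in the last row of Table \ref{table_k(G)} drop out.
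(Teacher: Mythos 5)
Your proposal is correct and follows essentially the same route as the paper: since $G$ has class two with $G'\leqslant Z(G)$, each conjugacy class is a coset $g[g,G]$, and one counts classes by partitioning the non-central elements according to $(a,b)$ (equivalently, by the shape of $[g,G]$), using Lemma \ref{Im(f_a)} and Lemma \ref{lem fa&fb^2} exactly as the paper does, including the $p=2$ versus $p$ odd split in the $k/2$ even case of type $D$. The case analysis, the counts of compatible $b$'s, and the final simplifications all check out against the paper's table.
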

\begin{proof}

We only give the details in the case  $G=D_p(m,\theta,0)$, since the cases where $G=A_p(m,\theta), B_p(m,\theta,0)$ and $C_p(m,\theta,0)$ are similar and easier.
It is deduced from Table \ref{table_commutator} that
\[
(a,b,c)^{G}=\{(a,b,c+x):x\in \tup{Im}(f_{a,\theta})+ \tup{Im}(f_{b,\theta^2})\}.
\]
Recall that when $k>2$,  $\tup{Im}(f_{a,\theta})$ is an $\bF_{\theta}$-hyperplane if $a\neq 0$ and $\tup{Im}(f_{b,\theta^2})$ is an $\bF_{\theta^2}$-hyperplane if $b\neq 0$. So we have that $\tup{Im}(f_{a,\theta})+\tup{Im}(f_{b,\theta^2})$ equals $\tup{Im}(f_{a,\theta})$ or $\bF$ according as $\tup{Im}(f_{b,\theta^2})\subseteq\tup{Im}(f_{a,\theta})$ or not.
In the former case, we  get $p^n$ distinct conjugacy classes whose elements have $(a,b)$ as their first two coordinates.
In the later case, we have exactly one conjugate class whose elements have $(a,b)$ as their first two coordinates.
By Lemma \ref{lem fa&fb^2}, we list the number of conjugate classes in each case in the following table. We obtain $\tup{k}(G)$ by adding up the numbers in all cases. This completes the proof for the case $G=D_p(m,\theta,0)$.
\begin{table}[H]
\resizebox{\textwidth}{!}{
\begin{tabular}{c|ccccc}

\hline
\multirow{2}{*}{$k$}& $a=0$& $a\neq 0$&$a=0$&$a\neq 0,b\neq0,$ &$a\neq 0,b\neq0,$ \\

&$b=0$&$ b=0$ &$b\neq 0$ &$\tup{Im}(f_{b,\theta^2})\subseteq\tup{Im}(f_{a,\theta})$&$\tup{Im}(f_{b,\theta^2})\nsubseteq\tup{Im}(f_{a,\theta})$\\
 \hline
$k>1$ odd& $p^m$&$(p^m-1)p^n$ &$(p^m-1)p^n$&$(p^m-1)(p^n-1)p^n$& $(p^m-1)(p^m-p^n)$\\
\hline
$k/2>1$ odd&$p^m$&$(p^m-1)p^n$&$(p^m-1)p^{2n}$&$(p^m-1)(p^{2n}-1)p^n$&$(p^m-1)(p^m-p^{2n})$\\

\hline
\multirow{2}{*}{$k/2>1$ even}& \multirow{2}{*}{$p^m$} &\multirow{2}{*}{$(p^m-1)p^n$} &\multirow{2}{*}{$(p^m-1)p^{2n}$}&$(2^m-1)(2^{2n}-1)2^n$&$(2^m-1)(2^m-2^{2n})$\\
&&&&$2(p^m-1)(p^{2n}-1)p^n$&$(p^m-1)(p^m-2p^{2n}+1)$\\
\hline
\end{tabular}}
\end{table}

  \end{proof}

\section{the main results }\label{main results}
Set  $\bF=\bF_{p^m}$, $\xi_p=\exp(\frac{2\pi\sqrt{-1}}{p})$ and recall that $n=m/k$. Define
  \begin{equation}\label{psi_v}
    \psi_v(x)=\xi_p^{\textup{Tr}_m(vx)}, \forall x,v\in\bF,
  \end{equation}
and
    \begin{equation}\label{phi_w}
    \phi_w(y)=\xi_p^{\textup{Tr}_n(wy)}, \forall y,w\in\bF.
  \end{equation}
   It is easy to see that \[\tup{Irr}(\bF,+)=\{\psi_v: v\in \bF\}\tup{ and }\tup{Irr}(\bF_{p^n},+)=\{\phi_w: w\in \bF_{p^n}\}.\]
    For more information about the additive characters of a finite fields, please refer to \cite[Chapter 5]{finite fields}.

Let $T$ be a set of coset representatives  for $\bF_p^*$ in $\bF^*$.
 Set
 \begin{equation}\label{eq_J1}
   J_1=\{v\in\bF^*:\tup{Im}(f_{a,\theta})\nsubseteq\tup{Ker}(\psi_v)\tup{ for any }a\in\bF^*\}\tup{ and } J_2=\bF^*\setminus J_1.
 \end{equation}
  Similarly, set
  \begin{equation}\label{eq_S1}
S_1=\{v\in\bF^*:\tup{Im}(f_{b,\theta^2})\nsubseteq \tup{Ker}(\psi_v) \tup{ for any }b\in\bF^*\} \tup{ and } S_2=\bF^*\setminus S_1.
 \end{equation}
 In the case $G= C_p(m,\theta,\varepsilon)$, we define some sets that will be used in the main theorem.
   \begin{enumerate}
     \item When $k>1$ is odd, we define the set $I$ as follows:
     \begin{enumerate}
       \item If $p>2$, let $\alpha^{(v,w)}$ be as in \eqref{vartheta^{u,v}}.  Regard $\tup{PG}(\bF^2)$ as a projective space over $\bF_p$, and denote by $[v,w]$ the projective point corresponding to the non-zero vector $(v,w)$. Define
  \begin{equation}\label{eq_I,odd}
    I=\{\alpha^{(v,w)}\in \tup{Lin}(Z(G)):[v,w]\in \tup{PG}(\bF^2) \tup{ with } v\neq 0 \}.
  \end{equation}
       \item If $p=2$, let $\alpha^{(v,w,1)}$ be as in \eqref{vartheta^{v,w,e}}. Define
    \begin{equation}\label{eq_I,2}
    I=\{\alpha^{(v,w,1)}\in \tup{Lin}(Z(G)):v\in\bF^*,w\in\bF/\bF_2\}.
  \end{equation}
     \end{enumerate}

     \item When $k>2$ is even, we define the   sets $I_1,I_2$ as follows:
      \begin{enumerate}
        \item If $p>2$, define \[I_i=\{\alpha^{(v,w)}\in \tup{Lin}(Z(G)): [v,w]\in \tup{PG}(\bF^2) \tup{ with } v\in J_i\}, \,i=1,2.\]
        \item  If $p=2$, define \[I_i=\{\alpha^{(v,w,1)}\in \tup{Lin}(Z(G)):v\in J_i,w\in\bF/\bF_2\},\, i=1,2.\]
      \end{enumerate}
   \end{enumerate}
 We now state our main theorem below, whose proof will be given in Section \ref{pf_main result}.

 \begin{thm}\label{thm 1}
   Let $G$ be one of the four families of  \textbf{ Suzuki $p$-groups}. Write  $o(\theta)=k$.
   Then  $\tup{Irr}(G)$ is given in Table \ref{tableA}-\ref{tableD} respectively.
 \end{thm}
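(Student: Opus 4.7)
My plan is to split the analysis according to Lemma \ref{lem_Vz}, treating the $VZ$-cases and the non-$VZ$-cases separately. In every case, the linear characters are immediate from Lemma \ref{lem_nonlinearchar}: they are the $|G/G'|$ lifts from the irreducible characters of the abelian quotient $G/G'$, which I would parametrise via the additive characters $\psi_v$ and $\phi_w$ of $\bF$ and $\bF_{p^n}$ defined in \eqref{psi_v} and \eqref{phi_w}. The interesting work is in producing the non-linear characters and checking that nothing is missed.

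For the $VZ$-cases I would apply Theorem \ref{main theorem} directly. Given $Z(G)$ and $G'$ as read off from Table \ref{table_Z(G)&G'}, each non-linear irreducible character has the form $\chi_m^\lambda$ for some $\lambda \in \tup{Irr}(Z(G))$ with $\lambda|_{G'}\neq 1$. Using Lemma \ref{Im(f_a)}, which identifies $\tup{Im}(f_{a,\theta})$ as a trace hyperplane, the condition $\lambda|_{G'}\neq 1$ translates directly into membership in the sets $J_1\cup J_2$ (or $S_1\cup S_2$ when a $\theta^2$-term is present). Writing $\lambda$ as a product of $\psi_v$ and $\phi_w$ components and matching with the explicit formula $\chi_m^\lambda(g) = m\lambda(g)\mathbf{1}_{Z(G)}(g)$ yields exactly the rows in Tables \ref{tableA}--\ref{tableD} associated to the $VZ$ rows of Lemma \ref{lem_Vz}.

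For the non-$VZ$-cases (type $A$ with $k>2$, and types $B,C,D$ with $\varepsilon=0$ and $k>2$) I would use Clifford theory relative to a large abelian normal subgroup. For each central character $\psi_v$ with $v\in\bF^*$, I would choose an abelian normal subgroup $M_v$ of the form $Z(G) + \langle(a,0,\ldots)\rangle$ for a judiciously chosen $a$; the extension $\tilde\psi_v$ of $\psi_v$ to $M_v$ exists precisely when $\tup{Im}(f_{a,\theta})\subseteq\tup{Ker}(\psi_v)$ (and the analogous $f_{b,\theta^2}$ condition for types $D$ and $C$). For $v\in J_1$ no such $a$ exists outside the centre, so the induced character $\tup{Ind}_{Z(G)}^G \psi_v$ is itself irreducible of degree $[G:Z(G)]^{1/2}$; for $v\in J_2$ we can enlarge $M_v$ and induce from there, producing an irreducible of smaller degree. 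Orbits of $\psi_v$ under the action of $\bF_p^*$ (arising because $\psi_v$ and $\psi_{cv}$ yield equivalent characters when combined with scalar changes of $c\in\bF_p^*$) explain why one works with coset representatives $T$ for $\bF_p^*$ in $\bF^*$. For types $C$ and $D$ the central character also carries a $w$-component, and the sets $I, I_1, I_2$ in \eqref{eq_I,odd}--\eqref{eq_I,2} record which combinations actually occur; verifying the linear-algebraic conditions on $(v,w)$ is a direct consequence of the multiplication table.

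Completeness is then checked by a class-equation bookkeeping: sum the number of linear characters $|G/G'|$ with the number of induced irreducibles of each degree, and compare with $\tup{k}(G)$ computed in Lemma \ref{k(G)of four types}. I expect the bulk of the work, and the main obstacle, to be the case $G=D_p(m,\theta,0)$ with $k/2$ even, since by Lemma \ref{lem fa&fb^2}(3) the compatibility condition $\tup{Im}(f_{b,\theta^2})\subseteq\tup{Im}(f_{a,\theta})$ only holds for $a\in\langle\gamma^{(p^{2n}+1)/\gcd(2,p-1)}\rangle$, forcing an additional bifurcation of the non-linear characters; moreover the $p=2$ and $p>2$ situations genuinely diverge (as already visible in the two rows of Lemma \ref{k(G)of four types} for this case), so separate inductive extensions of $\psi_v$ will be needed. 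A secondary technical point is to be careful in type $C$ with the $a^{1/p}$ factor in the commutator, which twists the trace conditions by a Frobenius and therefore shifts the indexing set for the central characters accordingly.
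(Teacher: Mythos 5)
Your overall architecture (linear characters from $G/G'$, the $VZ$-cases by Theorem \ref{main theorem}, the rest organised by central characters $\psi_v$, completeness by counting against Lemma \ref{k(G)of four types}) matches the paper's, but the engine you propose for producing the non-linear characters in the non-$VZ$ cases does not work as stated. First, $\tup{Ind}_{Z(G)}^{G}\psi_v$ has degree $[G:Z(G)]$, not $[G:Z(G)]^{1/2}$; it is never irreducible here, but rather equals $e\chi$ for a unique irreducible constituent $\chi$ with $e=\chi(1)$ (the fully ramified situation). Second, your inducing subgroups $M_v=Z(G)+\langle(a,0,\dots)\rangle$ are far too small: to obtain an irreducible of degree $p^{(m-2n)/2}$, say, by monomial induction you must induce a linear character from a subgroup of that index, i.e.\ from a maximal isotropic subgroup for the form $\beta_v(x,y)=\psi_v([x,y])$ on $G/Z(G)$, which contains $Z(G)$ with large index -- not just one extra generator. (Also, $\psi_v$ always extends to your abelian $M_v$; the condition $\tup{Im}(f_{a,\theta})\subseteq\tup{Ker}(\psi_v)$ governs whether $(a,0,\dots)$ lies in the radical of $\beta_v$, not whether the extension exists.) These points are repairable via the standard theory of characters of central type, but as written the construction does not produce the characters in Tables \ref{tableA}--\ref{tableD}, and evaluating the resulting monomial characters pointwise would be substantially messier than what the tables record.

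The paper avoids induction entirely: Lemma \ref{lem_VZ gp} shows that for every nontrivial $\lambda\in\tup{Lin}(Z(G))$ the quotient $G/\tup{Ker}(\lambda)$ is again a $VZ$-group (when non-abelian), Lemma \ref{G_v} computes the centres of these quotients $G_v$ (resp.\ $G_\alpha$), Theorem \ref{main theorem} gives their non-linear characters explicitly as $r\lambda$ on the centre and $0$ elsewhere, and Lemma \ref{lem_corre} shows by the class-number count that lifting these over $v\in T$ (resp.\ $\alpha\in I$) exhausts $\tup{Irr}_1(G)$. If you want to salvage your route, you should replace ``induce from $Z(G)+\langle a\rangle$'' by ``pass to the fully ramified section over $\tup{Ker}(\lambda)$,'' at which point you are essentially reproving Lemma \ref{lem_VZ gp}. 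One further substantive omission: for $G=C_p(m,\theta,0)$ with $p=2$ the centre $Z(G)\cong A_2(m,1)$ has exponent $4$, so its linear characters involve fourth roots of unity (Lemma \ref{C_Z(G)}); this is exactly the point where \cite{Suzuki} went wrong, and your remark about the sets $I,I_1,I_2$ ``recording which combinations occur'' glosses over it.
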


 \begin{table}[H]
\caption{Irreducible characters of $G:=A_p(m,\theta)$}\label{tableA}
\resizebox{\textwidth}{!}{
\begin{tabular}{ccccccc}
\hline\hline
 Case &Degree & Family&  Number &Character&Restriction  \\
 \hline\hline
\multirow{2}{*}{k=2}&$1$& $\tup{Lin}(G)$& $p^{m+n}$&   $\chi_1^{\beta}(a,b)=\beta\overline{(a,b)},\overline{(a,b)}\in G/G' $& $\beta\in \tup{Lin}(G/G')$ \\

&$p^n$ &$\tup{Irr}_{(p^n)}(G)$& $p^m-p^{n}$&$\chi_{p^n}^v(a,b)=
\begin{cases}
  p^n\psi_v(b) &\tup{ if } a=0\\
  0  &\tup{ otherwise}
\end{cases}$ &$v\in \bF\setminus\bF_{p^n}$
\\
\hline
\multirow{2}{*}{$k >1$ odd}&$1$& $\tup{Lin}(G)$& $p^{m}$& $\chi_1^v(a,b)=\psi_v(b)$ &$v\in\bF$\\

&$p^{\frac{m-n}{2}}$ &$\tup{Irr}_{(p^{\frac{m-n}{2}})}(G)$& $p^n(p^m-1)$& $\chi_{p^{\frac{m-n}{2}}}^{(v,\lambda)}=\eqref{eq_nonlin(v,lam2)},r=p^{\frac{m-n}{2}}$&$ v\in T,\lambda\in \tup{Lin}(Z(G_v)),\lambda|_{G'_v}\neq 1$\\
\hline
\multirow{3}{*}{$k >2$ even}&$1$& $\tup{Lin}(G)$ &$p^{m}$& $\chi_1^{v}(a,b)=\psi_v(b)$ &$v\in\bF$\\

&$p^{\frac{m}{2}}$& $\tup{Irr}_{(p^{\frac{m}{2}})}(G)$& $\frac{p^n(p^m-1)}{p^n+1}$& $\chi_{p^{m/2}}^{(v,\lambda)}=\eqref{eq_nonlin(v,lam2)},r=p^{m/2}$ &$v\in J_1\cap T,1\neq\lambda\in \tup{Lin}(Z(G_v))$ \\

&$p^{\frac{m-2n}{2}}$ &$\tup{Irr}_{(p^{\frac{m-2n}{2}})}(G)$& $\frac{p^{2n}(p^m-1)}{p^n+1}$ &  $\chi_{p^{(m-2n)/2}}^{(v,\lambda)}=\eqref{eq_nonlin(v,lam2)},r=p^{(m-2n)/2}$ &$v\in J_2\cap T,\lambda\in\tup{Lin}(Z(G_v)),\lambda|_{G'_v}\neq 1$\\
\hline

\end{tabular}}
\end{table}

\begin{table}[H]
\caption{Irreducible characters of $G:=B_p(m,\theta,\varepsilon)$}\label{tableB}
\resizebox{\textwidth}{!}{
\begin{tabular}{ccccccc}
\hline \hline
Case & Degree & Family &  Number &Character &Restriction\\
 \hline \hline
\multirow{2}{*}{ $\varepsilon\neq 0$}&$1$& $\tup{Lin}(G)$& $p^{2m}$& $  \chi_1^{(v,w)}(a,b,c)=\eqref{eq_linuni} $& $v,w\in\bF$\\

&$p^m$ &$\tup{Irr}_{(p^m)}(G)$ &$p^m-1$& $  \chi_{p^m}^v(a,b,c)=
 \begin{cases}
   p^m\psi_v(a,b,c)&\tup{ if } {a=b=0}\\
   \,0& \tup{ otherwise}
 \end{cases}$ &$v\in \bF^*$\\
\hline
\multirow{2}{*}{ $\varepsilon=0, k=2$}&$1$&$\tup{Lin}(G)$& $p^{2m+n}$&$\chi_1^{\beta}(a,b,c)=\beta\overline{(a,b,c)},\overline{(a,b,c)}\in G/G' $ &$\beta\in Lin(G/G')$\\

&$p^{m}$ &$\tup{Irr}_{(p^{m})}(G)$& $p^m-p^n$&  $\chi_{p^m}^v(a,b,c)=
 \begin{cases}
   p^m\psi_v(a,b,c)&\tup{ if } {a=b=0}\\
   \,0& \tup{ otherwise}
 \end{cases}$ &$v\in\bF\setminus\bF_{p^n}$\\
\hline
\multirow{2}{*}{ $\varepsilon= 0, k>1$ odd}&$1$& $\tup{Lin}(G)$& $p^{2m}$& $\chi_1^{(v,w)}=\eqref{eq_linuni}$ &$v,w\in\bF$\\

&$p^{m-n}$ &$\tup{Irr}_{(p^{m-n})}(G)$&$p^{2n}(p^m-1)$ &  $\chi_{p^{m-n}}^{(v,\lambda)}=\eqref{eq_nonlin(v,lam)},r=p^{m-n}$ &$v\in T,\lambda\in \tup{Lin}(Z(G_v)),\lambda|_{G'_v}\neq 1$ \\

\hline
\multirow{2}{*}{ $\varepsilon= 0, k>1$ even}&$1$& $\tup{Lin}(G)$& $p^{2m}$& $\chi_1^{(v,w)}=\eqref{eq_linuni}$ &$v,w\in\bF$\\

&$p^{m}$& $\tup{Irr}_{(p^{m})}(G)$&$\frac{p^n(p^m-1)}{p^n+1}$& $\chi_{p^m}^{(v,\lambda)}=\eqref{eq_nonlin(v,lam)},r=p^m$ &$v\in J_1\cap T,1\neq\lambda\in \tup{Lin}(Z(G_v))$ \\

&$p^{m-2n}$ &$\tup{Irr}_{(p^{m-2n})}(G)$& $\frac{p^{4n}(p^m-1)}{p^n+1}$ &  $\chi_{p^{m-2n}}^{(v,\lambda)}=\eqref{eq_nonlin(v,lam)},r=p^{m-2n}$ &$v\in J_2\cap T,\lambda\in\tup{Lin}(Z(G_v)),\lambda|_{G'_v}\neq 1$\\
\hline

\end{tabular}}
\end {table}

\begin{table}[H]

\caption{Irreducible characters of $G:=C_p(m,\theta,\varepsilon)$}\label{tableC}

\resizebox{\textwidth}{!}{
\begin{tabular}{ccccccc}
\hline\hline
Case & Degree & Family&  Number &Character &Restriction\\
\hline\hline
\multirow{2}{*}{$\varepsilon=0,k=2$}& 1 &$\tup{Lin}(G)$&$p^{2m+n}$& $\chi_1^{\beta}=\beta\overline{(a,b,c)},\overline{(a,b,c)}\in G/G'$ &$\beta\in \tup{Lin}(G/G')$ \\
&$p^{\frac{m}{2}}$ &$\tup{Irr}_{(p^{\frac{m}{2}})}(G)$ & $p^{m}(p^m-p^n)$&  $\chi_{p^{\frac{m}{2}}}^{\beta}(a,b,c)=
\begin{cases}
  p^{\frac{m}{2}}\beta\overline{(a,b,c)} &\tup{ if } \overline{(a,b,c)}\in Z(G)\\
  0 &\tup{ otherwise }
\end{cases}
$ &$\beta\in \tup{Lin}(Z(G)),\beta|_{G'}\neq 1$\\
\hline
\multirow{2}{*}{$\varepsilon=0,k>1$ odd}&$1$& $\tup{Lin}(G)$& $p^{2m}$& $\chi_1^{(v,w)}=\eqref{eq_linuni}$ &$v,w\in\bF$\\

&$p^{\frac{m-n}{2}}$&$\tup{Irr}_{(p^{\frac{m-n}{2}})}(G)$&$p^{m+n}(p^m-1)$&  $\chi_{p^{\frac{m-n}{2}}}^{(\alpha,\lambda)}=\eqref{eq_nonlinuni2},r=p^{\frac{m-n}{2}}$ &$\alpha\in I, \lambda\in \tup{Lin}(Z(G_\alpha)) ,\lambda|_{G'_\alpha}\neq 1$\\
\hline
\multirow{3}{*}{$\varepsilon=0,k>2$ even}&$1$& $\tup{Lin}(G)$&$p^{2m}$& $\chi_1^{(v,w)}=\eqref{eq_linuni}$ &$v,w\in\bF$\\

&$p^{\frac{m}{2}}$ &$\tup{Irr}_{(p^{\frac{m}{2}})}(G)$& $\frac{p^{m+n}(p^m-1)}{p^n+1}$&  $\chi_{p^{\frac{m}{2}}}^{(\alpha,\lambda)}=\eqref{eq_nonlinuni2},r=p^{\frac{m}{2}}$ &$\alpha\in I_1, 1\neq\lambda\in \tup{Lin}(Z(G_\alpha))$\\
&$p^{\frac{m-2n}{2}}$ &$\tup{Irr}_{(p^{\frac{m-2n}{2}})}(G)$& $\frac{p^{m+2n}(p^m-1)}{p^n+1}$&  $\chi_{p^{\frac{m-2n}{2}}}^{(\alpha,\lambda)}=\eqref{eq_nonlinuni2},r=p^{\frac{m-2n}{2}}$ &$\alpha\in I_2,\lambda\in \tup{Lin}(Z(G_\alpha)),\lambda|_{G'_\alpha}\neq 1$\\
\hline
\end{tabular}}
\end{table}
\begin{table}[H]
\caption{Irreducible characters of $G:=D_p(m,\theta,0)$}\label{tableD}
\resizebox{\textwidth}{!}{
\begin{tabular}{cccccccc}
\hline\hline
Case& Degree & Family& \multicolumn {2}{c}{ Number} &Character &Restriction\\
 \hline\hline
\multirow{2}{*}{$k>1$ odd}&$1$& $\tup{Lin}(G)$& \multicolumn {2}{c}{$p^{2m}$}& $\chi_1^{(v,w)}=\eqref{eq_linuni}$ &$v,w\in\bF $\\

&$p^{m-n}$ &$\tup{Irr}_{(p^{m-n})}(G)$&\multicolumn {2}{c}{$p^{2n}(p^m-1)$}& $\chi_{p^{m-n}}^{(v,\lambda)}=\eqref{eq_nonlin(v,lam)},r=p^{m-n}$ &$v\in T,\lambda\in \tup{Lin}(Z(G_v)),\lambda|_{G'_v}\neq 1$ \\
\hline
\multirow{3}{*}{$k/2>1$ odd}&$1$& $\tup{Lin}(G)$&\multicolumn {2}{c}{ $p^{2m}$}& $\chi_1^{(v,w)}=\eqref{eq_linuni}$ &$v,w\in\bF$\\

&$p^{m-n}$& $\tup{Irr}_{(p^{m-n})}(G)$&\multicolumn {2}{c}{$\frac{p^{3n}(p^m-1)}{p^n+1}$}& $\chi_{p^{m-n}}^{(v,\lambda)}=\eqref{eq_nonlin(v,lam)},r=p^{m-n}$ &$v\in J_1\cap T,\lambda\in \tup{Lin}(Z(G_v)),\lambda|_{G'_v}\neq 1$\\

&$p^{m-2n}$ &$\tup{Irr}_{(p^{m-2n})}(G)$& \multicolumn {2}{c}{$\frac{p^{4n}(p^m-1)}{p^n+1}$}&  $\chi_{p^{m-2n}}^{(v,\lambda)}=\eqref{eq_nonlin(v,lam)},r=p^{m-2n}$ &$v\in J_2\cap T,\lambda\in\tup{Lin}(Z(G_v)),\lambda|_{G'_v}\neq 1$\\
\hline
\multirow{5}{*}{$k/2>1$ even}&$1$& $\tup{Lin}(G)$&\multicolumn {2}{c}{$p^{2m}$}& $\chi_1^{(v,w)}=\eqref{eq_linuni}$ &$v,w\in\bF$\\

&$p^{m}$& $\tup{Irr}_{(p^{m})}(G)$&$\frac{2^{3n}(2^m-1)}{(2^n+1)(2^{2n}+1)}$&$\frac{(p^{3n}+1)(p^m-1)}{(p^n+1)(p^{2n}+1)}$  &$\chi_{p^{m}}^{(v,\lambda)}=\eqref{eq_nonlin(v,lam)},r=p^{m}$ &$v\in S_1\cap J_1\cap T,1\neq\lambda\in \tup{Lin}(Z(G_v))$\\

&$p^{m-n}$& $\tup{Irr}_{(p^{m-n})}(G)$& $\frac{2^{4n}(2^m-1)}{(2^n+1)(2^{2n}+1)}$& $\frac{(p^{2n}-1)(p^m-1)p^{2n}}{(p^n+1)(p^{2n}+1)}$ & $\chi_{p^{m-n}}^{(v,\lambda)}=\eqref{eq_nonlin(v,lam)},r=p^{m-n}$ &$v\in S_1\cap J_2\cap T,\lambda\in \tup{Lin}(Z(G_v)),\lambda|_{G'_v}\neq 1$\\

&$p^{m-2n}$& $\tup{Irr}_{(p^{m-2n})}(G)$&$\frac{2^{5n}(2^m-1)}{(2^n+1)(2^{2n}+1)}$ & $\frac{(p^{n}-1)(p^m-1)p^{4n}}{(p^n+1)(p^{2n}+1)}$& $\chi_{p^{m-2n}}^{(v,\lambda)}=\eqref{eq_nonlin(v,lam)},r=p^{m-2n}$ &$v\in S_2\cap J_1\cap T,\lambda\in \tup{Lin}(Z(G_v)),\lambda|_{G'_v}\neq 1$ \\

&$p^{m-3n}$ &$\tup{Irr}_{(p^{m-3n})}(G)$&$\frac{2^{6n}(2^m-1)}{(2^n+1)(2^{2n}+1)}$ & $\frac{(2p^{6n})(p^m-1)}{(p^n+1)(p^{2n}+1)}$&  $\chi_{p^{m-3n}}^{(v,\lambda)}=\eqref{eq_nonlin(v,lam)},r=p^{m-3n}$ &$v\in S_2\cap J_2\cap T,\lambda\in\tup{Lin}(Z(G_v)),\lambda|_{G'_v}\neq 1$\\
\hline
\end{tabular}}
\end{table}

  \begin{remark}
 \begin{enumerate}
 \item In the cases $\varepsilon\neq 0$ and $G=C_p(m,\theta,\varepsilon)$ or $D_p(m,\theta,\varepsilon)$, $\tup{Irr}(G)$ is the same as that in the case $G=B_p(m,\theta,\varepsilon)$ with $\varepsilon\neq 0$. Besides, In the case $G=D_p(m,\theta,0)$ with $k=2$, $\tup{Irr}(G)$ is the same as that in the case $G=C_p(m,\theta,0)$ with $k=2$. So we omit those cases in the corresponding tables.
  \item We can read $\tup{cd}(G)$ from the second column of the following tables.
  \item We give the equations indicated in Table \ref{tableA}-\ref{tableD} as follows:
  \begin{equation}\label{eq_linuni}
  \chi_1^{(v,w)}(a,b,c)=\psi_v(a)\psi_w(b).
\end{equation}
\begin{equation}\label{eq_nonlin(v,lam2)}
   \chi_{r}^{(v,\lambda)}(a,b)=
           \begin{cases}
            r\lambda\overline{(a,b)},& \tup{ if } \overline{(a,b)}\in Z(G_v),\\
             \,\,0, &\tup{ otherwise}.
        \end{cases}
 \end{equation}
   \begin{equation}\label{eq_nonlin(v,lam)}
   \chi_{r}^{(v,\lambda)}(a,b,c)=
           \begin{cases}
            r\lambda\overline{(a,b,c)},& \tup{ if } \overline{(a,b,c)}\in Z(G_v),\\
             \,\,0, &\tup{ otherwise}.
        \end{cases}
 \end{equation}
  \begin{equation}\label{eq_nonlinuni2}
   \chi_{r}^{(\alpha,\lambda)}(a,b,c)=
           \begin{cases}
            r\alpha\overline{(a,b,c)},& \tup{ if } \overline{(a,b,c)}\in Z(G_\alpha),\\
            \,\, 0, &\tup{ otherwise}.
        \end{cases}
 \end{equation}
 Here the notation $G_v$ and $G_\alpha$ are defined in Lemma \ref{G_v}.
 \end{enumerate}
 \end{remark}

\section{proof of theorem \ref{thm 1} }\label{pf_main result}
 We now prove our main results given in Theorem \ref{thm 1}.
Since the irreducible characters of a $VZ$-group is determined by Theorem \ref{main theorem}, we only consider the Suzuki $p$-group which is not a $VZ$-group in this section.
 Recall that $\bF=\bF_{p^m}$, $k:=o(\theta)$ and $n=m/k$.
 \begin{lemma}\label{lem_fainv}
 Let $v\in \bF^*$ and $a\in\bF$.  Then   $\tup{Im}(f_{a,\theta})\subseteq \tup{Ker}(\psi_v)$ if and only if $va^{\theta+1}\in \bF_{\theta}$.
\end{lemma}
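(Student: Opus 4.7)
The plan is to unwind the defining condition $\tup{Im}(f_{a,\theta}) \subseteq \tup{Ker}(\psi_v)$ into the statement $\tup{Tr}_m(v(ax^{\theta} - xa^{\theta})) = 0$ for every $x \in \bF$, and then transform this into an algebraic identity in $v$ and $a$ that can be recognized as $va^{\theta+1} \in \bF_\theta$. The case $a=0$ is trivial on both sides, so I will assume $a \in \bF^*$ throughout.

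The key manipulation is to remove the $\theta$ from the variable $x$ in the first summand. Since $\theta$ is an automorphism of $\bF$ fixing $\bF_p$, the absolute trace $\tup{Tr}_m$ is $\theta$-invariant, so
\[
\tup{Tr}_m(vax^{\theta}) = \tup{Tr}_m((vax^{\theta})^{\theta^{-1}}) = \tup{Tr}_m(v^{\theta^{-1}}a^{\theta^{-1}}x).
\]
Thus the vanishing condition becomes $\tup{Tr}_m\bigl((v^{\theta^{-1}}a^{\theta^{-1}} - va^{\theta})\, x\bigr) = 0$ for all $x \in \bF$, and by non-degeneracy of the trace form this is equivalent to $v^{\theta^{-1}}a^{\theta^{-1}} = va^{\theta}$.

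Applying $\theta$ to both sides and rearranging, I obtain $v^{\theta}/v = a^{1-\theta^{2}}$. On the other hand, $va^{\theta+1} \in \bF_\theta$ means $(va^{\theta+1})^{\theta} = va^{\theta+1}$, i.e.\ $v^{\theta}a^{\theta^{2}+\theta} = va^{\theta+1}$, which rearranges to exactly the same identity $v^{\theta}/v = a^{1-\theta^{2}}$. This chain of equivalences yields the lemma.

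I do not expect any real obstacle here; the only subtlety is the legitimate use of the $\theta$-invariance of $\tup{Tr}_m$ (which hinges on $\bF_p \subseteq \bF_\theta$) and keeping track of the $\theta^{-1}$ shift, both of which are straightforward.
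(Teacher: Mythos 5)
Your proof is correct and follows essentially the same route as the paper: unwind the containment to the vanishing of $\tup{Tr}_m(v(ax^{\theta}-xa^{\theta}))$ for all $x$, use $\theta$-invariance of the absolute trace together with non-degeneracy of the trace form to get a single algebraic identity, and rearrange it into $(va^{\theta+1})^{\theta}=va^{\theta+1}$. Your explicit disposal of the trivial case $a=0$ and the careful bookkeeping of the $\theta^{-1}$ shift are fine additions but do not change the argument.
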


\begin{proof}
 Notice that $\tup{Im}(f_{a,\theta})\subseteq \tup{Ker}(\psi_v)$ if and only if $\tup{Tr}_m(v(ax^\theta-xa^\theta))=0$ for all $x\in\bF$. The later holds if and only if
 $va=(v a^{\theta})^\theta$, i.e.,  $va^{\theta+1}=(va^{\theta+1})^\theta$. So the claim follows.
\end{proof}
\begin{lemma}\label{lem(p^l+1,p^m-1)}
  Set $n:=\gcd(l,m)$ and $k:=m/n$. Then
  \[
  \gcd(p^l+1,p^m-1)=
  \begin{cases}
    \,1, & \tup{ if } k \tup{ is odd and } p=2,\\
   \, 2, & \tup{ if } k \tup{ is odd and } p \tup{ is odd},\\
    p^n+1, & \tup{ if } k \tup{ is even}.
  \end{cases}
  \]
\end{lemma}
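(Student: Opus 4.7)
The plan is to identify $\gcd(p^l+1, p^m-1)$ by first locating it inside a cyclotomic-type divisor and then computing it explicitly.

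First I would set $d := \gcd(p^l+1, p^m-1)$ and observe that $p^l+1$ divides $p^{2l}-1$, so $d$ divides $\gcd(p^{2l}-1, p^m-1) = p^{\gcd(2l,m)}-1$. Writing $l = nr$ with $\gcd(r,k)=1$, a short computation of $\gcd(2l,m) = n\gcd(2r,k)$ gives two cases: if $k$ is odd, then $\gcd(2r,k)=1$ and hence $\gcd(2l,m)=n$; if $k$ is even, then $r$ must be odd and $\gcd(2r,k) = 2\gcd(r,k/2) = 2$, so $\gcd(2l,m)=2n$.

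Next I would reduce $p^l+1$ modulo the appropriate cyclotomic divisor to pin down $d$ precisely. In the $k$ odd case, $n \mid l$ gives $p^l \equiv 1 \pmod{p^n-1}$, hence $p^l+1 \equiv 2 \pmod{p^n-1}$, so $d = \gcd(p^l+1, p^n-1) = \gcd(2, p^n-1)$, which is $1$ if $p=2$ and $2$ if $p$ is odd. In the $k$ even case, $(p^n)^2 \equiv 1 \pmod{p^{2n}-1}$ together with $r$ odd yields $p^l = (p^n)^r \equiv p^n \pmod{p^{2n}-1}$, so $p^l+1 \equiv p^n+1 \pmod{p^{2n}-1}$, and since $p^n+1 \mid p^{2n}-1$ we conclude $d = p^n+1$.

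No single step looks to be a serious obstacle; the only thing to be careful about is the bookkeeping in the computation $\gcd(2r,k) = 2$ when $k$ is even (using that $\gcd(r,k)=1$ forces $r$ odd and $\gcd(r, k/2)=1$), and the parity argument $(p^n)^r \equiv p^n \pmod{p^{2n}-1}$ for $r$ odd, which is the essential ingredient that produces the factor $p^n+1$.
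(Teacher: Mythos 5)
Your proof is correct. Note that the paper states this lemma without any proof (it is treated as a standard number-theoretic fact), so there is no argument of the authors' to compare against; your route --- bounding $\gcd(p^l+1,p^m-1)$ by $p^{\gcd(2l,m)}-1$, computing $\gcd(2l,m)=n$ or $2n$ according to the parity of $k$, and then reducing $p^l+1$ modulo $p^n-1$ (resp.\ $p^{2n}-1$, using that $r$ is odd) --- is the standard one, and all the steps, including the two-sided divisibility identification $d=\gcd(p^l+1,p^{\gcd(2l,m)}-1)$, check out.
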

\begin{lemma}\label{lem_JI&J_2}
 Let $J_i, S_i, \,i=1,2$ be as in \eqref{eq_J1} and \eqref{eq_S1} respectively. Suppose that $\gamma$ is a primitive element of $\bF^*$. Then the following holds:
  \begin{enumerate}
    \item If $k$ is odd,  then $J_1=\varnothing$ and $J_2=\bF^*$.
    \item If $k$ is even, then
      $J_1=\bF^*\setminus\langle \gamma^{p^n+1}\rangle$ and $J_2=\langle \gamma^{p^n+1}\rangle$.
    \item If $k\equiv 2 \mod 4$, then $S_1=\varnothing$ and $S_2=\bF^*$.
    \item If $k\equiv 0 \mod 4$, then
      $S_1=\bF^*\setminus\langle \gamma^{p^{2n}+1}\rangle$ and $J_2=\langle \gamma^{p^{2n}+1}\rangle$.
  \end{enumerate}

\end{lemma}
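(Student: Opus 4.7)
My plan is to reduce both statements to coset computations in $\bF^*$ via Lemma~\ref{lem_fainv}. Since $a^\theta=a^{p^l}$, the condition $\tup{Im}(f_{a,\theta})\subseteq\tup{Ker}(\psi_v)$ is equivalent to $va^{p^l+1}\in\bF_\theta$, and because $v,a\in\bF^*$ this forces $va^{p^l+1}\in\bF_{p^n}^*$. Hence $v\in J_2$ if and only if $v\in \bF_{p^n}^*\cdot H_1^{-1}=\bF_{p^n}^*\cdot H_1$, where $H_1:=\{a^{p^l+1}:a\in\bF^*\}=\langle\gamma^{\gcd(p^l+1,p^m-1)}\rangle$. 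The analogous statement for $S_2$ uses the order-$k/\gcd(2,k)$ automorphism $\theta^2$: recalling that $\gcd(2l,m)=n\gcd(2,k)$, so $\bF_{\theta^2}^*=\bF_{p^n}^*$ when $k$ is odd and $\bF_{\theta^2}^*=\bF_{p^{2n}}^*$ when $k$ is even, we obtain $v\in S_2$ iff $v\in\bF_{\theta^2}^*\cdot H_2$, where $H_2:=\langle\gamma^{\gcd(p^{2l}+1,p^m-1)}\rangle$.

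Next I would identify $H_1$ and $H_2$ as explicit cyclic subgroups of $\bF^*$ by invoking Lemma~\ref{lem(p^l+1,p^m-1)}. For $H_1$: when $k$ is odd one has $H_1=\bF^*$ (if $p=2$) or $H_1=\langle\gamma^2\rangle$ of index $2$ (if $p$ odd); when $k$ is even, $H_1=\langle\gamma^{p^n+1}\rangle$. For $H_2$, the relevant quotient $m/\gcd(2l,m)=k/\gcd(2,k)$ controls the gcd, so $H_2=\bF^*$ or $\langle\gamma^2\rangle$ when $k/2$ is odd, and $H_2=\langle\gamma^{p^{2n}+1}\rangle$ when $k/2$ is even.

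The remaining task is to decide, in each case, whether the subgroup $\bF_{p^n}^*\cdot H_1$ (respectively $\bF_{\theta^2}^*\cdot H_2$) equals $H_1$ (respectively $H_2$) or the whole of $\bF^*$. Writing $\bF_{p^n}^*=\langle\gamma^{(p^m-1)/(p^n-1)}\rangle$ and using $(p^m-1)/(p^n-1)=\sum_{i=0}^{k-1}p^{in}$, I would check the containment $\bF_{p^n}^*\subseteq H_1$ by a parity/divisibility argument modulo the exponent defining $H_1$. For $k$ odd, the geometric-series sum has $k$ odd terms each congruent to $1\pmod 2$, so it is odd and $\bF_{p^n}^*\not\subseteq\langle\gamma^2\rangle$; combined with the case $p=2$ this shows $\bF_{p^n}^*\cdot H_1=\bF^*$, giving (1). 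For $k$ even, reducing $\sum_{i=0}^{k-1}p^{in}$ modulo $p^n+1$ (where $p^n\equiv-1$) gives telescoping sums equal to $0$, so $(p^n+1)\mid(p^m-1)/(p^n-1)$, forcing $\bF_{p^n}^*\subseteq H_1$, hence $\bF_{p^n}^*\cdot H_1=H_1=\langle\gamma^{p^n+1}\rangle$, which is (2). The same two-line argument, with $n$ replaced by $2n$ and $k$ by $k/2$, handles (3) and (4).

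The only real obstacle is bookkeeping: one must keep straight the two different automorphisms $\theta$ and $\theta^2$, their fixed subfields and orders, and the cases $p=2$ versus $p$ odd (which collapse because $\bF_{p^n}^*$ always sits in $\bF^*$ as an index-$(p^m-1)/(p^n-1)$ cyclic subgroup, and the crucial parity of that index coincides with the parity of $k$ when $p$ is odd, while being trivial when $p=2$). Once the divisibility computation $(p^{an}+1)\mid(p^m-1)/(p^{an}-1)\Leftrightarrow k/a$ even is performed in the abstract, all four claims fall out simultaneously.
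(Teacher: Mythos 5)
Your proposal is correct and follows essentially the same route as the paper: both use Lemma \ref{lem_fainv} to write $J_2$ (resp. $S_2$) as the product of the cyclic subgroups $\bF_\theta^*$ and $\{a^{\theta+1}:a\in\bF^*\}$ (resp. their $\theta^2$-analogues), and then invoke Lemma \ref{lem(p^l+1,p^m-1)} to identify that product. The only cosmetic difference is that you settle the final step by checking the containment $\bF_{p^n}^*\subseteq H_1$ via divisibility of $(p^m-1)/(p^n-1)$, whereas the paper computes the order of the product as an lcm; the underlying parity computation is the same.
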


\begin{proof}
Note that $(3),(4)$ can be deduced from $(1),(2)$ according to the definitions of $S_i$ and $J_i$. So we only prove the claims on  $J_i$.
If $v\in J_2$, then there exists $a\in\bF^*$ such that $\tup{Im}(f_{a,\theta})\subseteq\tup{Ker}(\psi_v)$. This is equivalent to $v\in(aa^\theta)^{-1}\bF_\theta^*$ by Lemma \ref{lem_fainv}.
  We conclude that
  \[
  J_2=\{xy^{1+\theta}: x\in\bF_\theta^*,y\in\bF^*\}.
  \]
It is a multiplicative subgroup of $\bF^*$, and it has order
\[
c:=\tup{lcm}(p^n-1,\frac{p^m-1}{\gcd(p^m-1,p^l+1)}), \tup{ where } x^\theta=x^{p^l}.\]
Then from Lemma \ref{lem(p^l+1,p^m-1)} we have $c=p^m-1$ and $J_2=\bF^*$ if $k$ is odd.
  Otherwise we have $c=\frac{p^m-1}{p^n+1}$ and $J_2=\langle \gamma^{p^n+1}\rangle$. Thus the proof is completed.
\end{proof}
Observe that the fourth column of Table \ref{tableA}- \ref{tableD} follows directly from the last column of those tables and 
the cardinality of $I$, $I_i$, $J_i$ and $S_i, i=1,2$ is deduced easily from their definitions and Lemma \ref{lem_JI&J_2}. We now give
 the cardinality of $S_2\cap J_2$, $S_1\cap J_2$, $S_2\cap J_1$ and $S_2\cap J_2$ with $k\equiv0 \mod 4$. 

For $v\in S_2\cap J_2$, there exist some $a,b\in\bF^*$ such that $\tup{Im}(f_{a,\theta})\subseteq \tup{Ker}(\psi_v)$ and $\tup{Im}(f_{b,\theta^2})\subseteq \tup{Ker}(\psi_v)$, that is,
\[
\tup{Im}(f_{a,\theta})+\tup{Im}(f_{b,\theta^2})\subseteq \tup{Ker}(\psi_v),\, i.e., \,\tup{Im}(f_{b,\theta^2})\subseteq\tup{Im}(f_{a,\theta})\subseteq \tup{Ker}(\psi_v).
\]
Recall that there are $p^{2n}-1$ distinct $a$'s in $\bF^*$ such that $\tup{Im}(f_{a,\theta})$ are the same according to Lemma \ref{Im(f_a)} (2). Combining it with Lemma \ref{lem fa&fb^2} (3),   we conclude that there exists $\frac{\gcd(2,p-1)(p^m-1)}{p^{4n}-1}$ distinct $\tup{Im}(f_{a,\theta})$'s which contain some $\tup{Im}(f_{b,\theta^2})$. Note that there are $p^n-1$ distinct $v$'s in $\bF^*$ such that $\tup{Ker}(\psi_v)$ contain the same $\tup{Im}(f_{a,\theta})$.
 Thus the cardinality of $S_2\cap J_2$ is equal to
 \[
 \frac{\gcd(2,p-1)(p^m-1)}{p^{4n}-1}(p^n-1)=\frac{\gcd(2,p-1)(p^m-1)}{(p^{2n}+1)(p^n+1)}.
 \]
As for the remaining  three sets, their cardinalities can follow from $|S_2\cap J_2|$.
We list the results on those cardinalities in the following table.
 \begin{table}[H]
\caption{Cardinalities of certain sets with $k/2>1$ even}\label{table(card)}
\begin{tabular}{ccccc}
\hline
 condition & $S_2\cap J_2$ & $S_1\cap J_2$&$S_2\cap J_1$&  $S_1\cap J_1$\\
 \hline
$p=2$& $\frac{p^m-1}{(p^{2n}+1)(p^n+1)}$&  $\frac{(p^m-1)p^{2n}}{(p^{2n}+1)(p^n+1)}$&$\frac{(p^m-1)p^{n}}{(p^{2n}+1)(p^n+1)}$&$\frac{(p^m-1)p^{3n}}{(p^{2n}+1)(p^n+1)}$\\

$p>2$ &$\frac{2(p^m-1)}{(p^{2n}+1)(p^n+1)}$&$\frac{(p^m-1)(p^{2n}-1)}{(p^{2n}+1)(p^n+1)}$&$\frac{(p^m-1)(p^{n}-1)}{(p^{2n}+1)(p^n+1)}$&$\frac{(p^m-1)(p^{3n}+1)}{(p^{2n}+1)(p^n+1)}$ \\
\hline
\end{tabular}
\end{table}
\begin{lemma}\label{lem_VZ gp}
Suppose $G$ is the Suzuki $p$-group which is not a $VZ$-group. Let $\lambda$ be any nontrivial linear character of $Z(G)$ and write $H=\tup{Ker}(\lambda)$. Then the quotient $\overline{G}:=G/H$ is  a $VZ$-group whenever $\overline{G}$ is non-abelian.
\end{lemma}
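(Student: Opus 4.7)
The plan is to use the observation that every Suzuki $p$-group has nilpotency class $2$ with an elementary abelian derived subgroup, which forces $\overline{G}'$ to have prime order as soon as $\overline{G}$ is non-abelian. First, reading off Table \ref{table_Z(G)&G'}, we have $G'\leq Z(G)$ in every case, so $G$ has class $2$ and $[g,G]\leq G'$ for every $g\in G$; consequently $[\overline{g},\overline{G}]\leq \overline{G}'$ for every $\overline{g}\in\overline{G}$. Inspecting the same table case by case, $G'$ is always contained in $\{(0,c):c\in\bF\}$ (type $A$) or $\{(0,0,c):c\in\bF\}$ (types $B,C,D$); a direct computation from the multiplications in Table \ref{table1} shows that this ambient subgroup is isomorphic to $(\bF,+)$, so $G'$ is elementary abelian of exponent $p$, even in cases such as $C_2(m,\theta,0)$ with $k>2$ where $Z(G)$ itself has higher exponent.

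Next, the non-abelianness of $\overline{G}$ is equivalent to $G'\not\leq H=\tup{Ker}(\lambda)$, i.e.\ $\lambda|_{G'}\neq 1$. Since $G'$ has exponent $p$, the image $\lambda(G')$ is a non-trivial cyclic subgroup of the $p$-th roots of unity, so it has order exactly $p$. This yields
\[
|\overline{G}'|=|G'H/H|=|G'/(G'\cap H)|=|\lambda(G')|=p.
\]

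Finally, take any $\overline{g}\in\overline{G}\setminus Z(\overline{G})$. By the definition of the center, $[\overline{g},\overline{G}]$ is a non-trivial subgroup of $\overline{G}'$, and since $|\overline{G}'|=p$, the only possibility is $[\overline{g},\overline{G}]=\overline{G}'$. Hence $\overline{G}$ satisfies the criterion of \cite[Lemma 2.2]{Lewis(GC)} recalled just before Theorem \ref{main theorem}, and is therefore a $VZ$-group. I do not anticipate any real obstacle: the entire argument is driven by the elementary fact that $G'$ is elementary abelian of exponent $p$, which bypasses any subtlety coming from $Z(G)$ potentially having larger exponent, and reduces the VZ verification to a one-line computation once $|\overline{G}'|=p$ is established.
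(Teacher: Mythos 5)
Your proposal is correct and follows essentially the same route as the paper: both arguments reduce the claim to showing $|\overline{G}'|=p$ and then invoke the criterion $[g,G]=G'$ for $g\notin Z(G)$. The only difference is cosmetic --- the paper splits into the cases $G'=Z(G)$ versus $G=C_p(m,\theta,0)$, whereas you handle all four families uniformly by observing that $G'$ has exponent $p$ (so $\lambda(G')$ has order exactly $p$ once $\lambda|_{G'}\neq 1$), which in fact supplies a justification the paper leaves implicit in the type-$C$ case.
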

\begin{proof}
Note that in the cases $G=A_p(m,\theta), B_p(m,\theta,0), D_p(m,\theta,0)$ with $k>2$, we have $G'=Z(G)\cong \bF$ by Table\ref{table1}.
It deduces that $\overline{G}'=Z(G)/H$  and it has size $p$.  Thus for each $g\in \overline{G}\setminus Z(\overline{G})$,   we have $[g,\overline{G}]= \overline{G'}$  and $\overline{G}$ is a $VZ$-group.

As for $G=C_p(m,\theta,0)$ with $k>2$, recall that \[G'=\{(0,0,c):c\in\bF\},\,\, Z(G)=\{(0,b,c):b,c\in\bF\}\cong A_p(m,1).\]
 The claim follows again from the fact that \[|\overline{G}'|=|G'H/H|=\frac{|G'|}{|G'\cap H|}=p.\]
\end{proof}

 We shall use the following lemma.
\begin{lemma}\label{lem_Qx}
Suppose that $r$ is a positive integer. If $r$ is even, take $c\in \bF_{2^r}$ such that $c+c^{2^{r/2}}=1$. Let $Q$ be a function from $\bF_{2^r}$ to $\bF_{2}$ given by
\begin{equation*}
Q(x)=\begin{cases}\sum_{i=0}^{\frac{r-1}{2}}\tup{Tr}_r(x^{2^i+1}),&\tup{ if }r\tup{ is odd},\\\sum_{i=0}^{r/2-1}\tup{Tr}_r(x^{2^i+1})+\tup{Tr}_r(cx^{2^{r/2}+1}),&\tup{ if }r\tup{ is even}.\end{cases}
\end{equation*}
Suppose $U=\{x\in \bF_{2^r}: \tup{Tr}_r(x)=0\}$.
Then for any $x,y\in U$, $Q(x+y)=Q(x)+Q(y)+\tup{Tr}_r(xy)$.
\end{lemma}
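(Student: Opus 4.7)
The identity asks for a quadratic-form behavior of $Q$ on $U$. My approach is to compute $Q(x+y) + Q(x) + Q(y)$ directly via binomial expansion and then collapse the resulting cross terms using (a) the Frobenius invariance of the trace, (b) the hypothesis $\operatorname{Tr}_r(y) = 0$, and (c) in the even case, the defining property $c + c^{2^{r/2}} = 1$.

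\textbf{Step 1: Expand cross terms.} Since we are in characteristic $2$, for every $i \ge 0$ we have $(x+y)^{2^i+1} = x^{2^i+1} + x^{2^i}y + xy^{2^i} + y^{2^i+1}$. Taking traces and summing over the relevant range of $i$, I obtain
\begin{equation*}
Q(x+y) + Q(x) + Q(y) = \sum_{i \in R} \operatorname{Tr}_r\bigl(x^{2^i} y + x y^{2^i}\bigr) + E,
\end{equation*}
where $R = \{0, 1, \ldots, (r-1)/2\}$ if $r$ is odd with $E=0$, and $R = \{0, 1, \ldots, r/2-1\}$ if $r$ is even with the extra piece $E = \operatorname{Tr}_r\bigl(c\,x^{2^{r/2}} y + c\, x y^{2^{r/2}}\bigr)$.

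\textbf{Step 2: Symmetrize using Frobenius.} Using $\operatorname{Tr}_r(z) = \operatorname{Tr}_r(z^{2^j})$ I rewrite $\operatorname{Tr}_r(x^{2^i} y) = \operatorname{Tr}_r(x\, y^{2^{r-i}})$. Substituting into the sum in Step 1 and noting that the $i=0$ contribution $2\operatorname{Tr}_r(xy)$ vanishes in characteristic $2$, the cross-term sum becomes $\sum_{j \in J} \operatorname{Tr}_r(x\, y^{2^j})$, where $J$ is the complement in $\{1,\ldots,r-1\}$ of $\emptyset$ (odd case) or of $\{r/2\}$ (even case). Equivalently, in the odd case this equals $\operatorname{Tr}_r\bigl(x \sum_{j=1}^{r-1} y^{2^j}\bigr)$, and in the even case it equals $\operatorname{Tr}_r\bigl(x \sum_{j \ne 0, r/2} y^{2^j}\bigr)$.

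\textbf{Step 3: Apply $\operatorname{Tr}_r(y)=0$.} The assumption $y \in U$ means $\sum_{j=0}^{r-1} y^{2^j} = 0$ as an element of $\bF_{2^r}$. Hence $\sum_{j=1}^{r-1} y^{2^j} = y$ in the odd case, yielding $\operatorname{Tr}_r(xy)$ and finishing that case. In the even case, $\sum_{j \ne 0, r/2} y^{2^j} = y + y^{2^{r/2}}$, giving a partial total $\operatorname{Tr}_r(xy) + \operatorname{Tr}_r(x y^{2^{r/2}})$.

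\textbf{Step 4: Handle the middle term (even case).} I rewrite $\operatorname{Tr}_r(c\, x^{2^{r/2}} y) = \operatorname{Tr}_r\bigl((c\, x^{2^{r/2}} y)^{2^{r/2}}\bigr) = \operatorname{Tr}_r(c^{2^{r/2}} x\, y^{2^{r/2}})$, so $E = \operatorname{Tr}_r\bigl((c + c^{2^{r/2}}) x\, y^{2^{r/2}}\bigr) = \operatorname{Tr}_r(x\, y^{2^{r/2}})$ by the defining property of $c$. Adding the contributions of Steps 3 and 4 and using that $2\operatorname{Tr}_r(x y^{2^{r/2}}) = 0$ in characteristic $2$, the result collapses to $\operatorname{Tr}_r(xy)$, as claimed.

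The only delicate point is bookkeeping the indices $j$ that appear after the Frobenius symmetrization of Step 2 — making sure that the $i=0$ duplications cancel and that the missing index in the even case is exactly $j = r/2$, which is then precisely cured by the extra middle term together with $c+c^{2^{r/2}}=1$. Everything else is routine manipulation of trace identities in characteristic $2$.
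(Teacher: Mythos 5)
Your proof is correct and follows essentially the same route as the paper's: binomial expansion of the cross terms, Frobenius symmetrization $\operatorname{Tr}_r(x^{2^i}y)=\operatorname{Tr}_r(xy^{2^{r-i}})$ to assemble $\sum_j y^{2^j}$, the hypothesis $\operatorname{Tr}_r(y)=0$ to collapse that sum, and $c+c^{2^{r/2}}=1$ to absorb the middle index $j=r/2$ in the even case. The index bookkeeping you flag as the delicate point checks out in both parities.
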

\begin{proof}
We only give the details when $r$ is even since the case where $r$ is odd is easier.
 Suppose $r$ is even and let $x,y\in U$. Then
 \begin{small}
\begin{align*}
Q(x+y)+Q(x)+Q(y)&=\sum_{i=0}^{r/2-1}\tup{Tr}_r((x+y)^{2^i+1}-x^{2^i+1}-y^{2^i+1})+\textup{Tr}_r(c(x+y)^{2^{r/2}+1}-cx^{2^{r/2}+1}-cy^{2^{r/2}+1})\\
&=\sum_{i=0}^{r/2-1}\tup{Tr}_r(x^{2^i}y+y^{2^i}x)+\tup{Tr}_r(cx^{2^{r/2}}y+cy^{2^{r/2}}x)\\
&=\sum_{i=0}^{r/2-1}\tup{Tr}_r(x(y^{2^{r-i}}+y^{2^i}))+\tup{Tr}_r(x(c^{2^{r/2}}+c)y^{2^{r/2}})\\
&=\tup{Tr}_r(x)\tup{Tr}_r(y)+\tup{Tr}_r(xy).
\end{align*}
\end{small}
The later equality holds since $c+c^{2^{r/2}}=1$.
The result then follows from the fact that $\tup{Tr}_r(x)=\tup{Tr}_r(y)=0$.
\end{proof}

For $p=2$, let $b_0$ be a fixed element in $\bF$ such that $\tup{Tr}_m(b_0)=1$. Then  for each element $b\in\bF$, we have the  decomposition $b=tb_0+b_1$ with $t\in \bF_2$ and $b_1\in \tup{Ker}(\tup{Tr}_m)$.
Besides, we define an auxiliary function:
\begin{equation}\label{kk}
\kappa:\bF_2\rightarrow \{0,1\}\subset \bZ ,\,\, \overline{0}\mapsto 0,\overline{1}\mapsto 1.
\end{equation}
\begin{lemma}\label{C_Z(G)}
Let $G=C_p(m,\theta,0)$ with $k>2$. Then the linear characters of $Z(G)\cong A_p(m,1)$ are given as follows.
  \begin{itemize}
    \item [(1)]When $p$ is odd,
    $\tup{Lin}(Z(G))=\{\alpha^{(v,w)}:v, w\in \bF\}$, where
    \begin{equation}\label{vartheta^{u,v}}
      \alpha^{(v,w)}(0,b,c)=\psi_w(b)\psi_v(c-\frac{1}{2}b^2).
    \end{equation}
    \item [(2)]When $p=2$, we have
    \[\textup{Lin}(Z(G))=\{\alpha^{(v,w,\epsilon)}: v\in \bF^*, w\in \bF/\bF_2, \epsilon=\pm1\}\cup\{\alpha^{v}: v\in \bF\}.\]
     The expression of  $\alpha^{(v,w,\epsilon)}$ is given by
     \begin{equation}\label{vartheta^{v,w,e}}
       \alpha^{(v,w,\epsilon)}(0,b,c)=(\epsilon i)^{\kappa(t)}(-1)^{Q(b_1)+\tup{Tr}_m(tb_0b_1)}\psi_w(b_1)\psi_v(c).
     \end{equation}
      Here $Q$ is the function from $\bF$ to $\bF_2$ defined in Lemma \ref{lem_Qx}, $i=\sqrt{-1}$ and $t,b_1$ are determined by the decomposition
    $v^{2^{m-1}}b=tb_0+b_1$.
    Besides,
    \begin{equation}\label{eq_alpha}
     \alpha^{v}(0,b,c)=\psi_v(b).
    \end{equation}
  \end{itemize}
  \end{lemma}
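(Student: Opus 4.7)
The plan is to exploit that $Z(G)=\{(0,b,c):b,c\in\bF\}$, with the multiplication inherited from $C_p(m,\theta,0)$, satisfies $(0,b,c)(0,e,f)=(0,b+e,c+f+be)$, so $Z(G)$ is an abelian group of order $p^{2m}$ isomorphic to $A_p(m,1)$ via $(0,b,c)\leftrightarrow(b,c)$. Once this is observed, the task reduces to exhibiting $p^{2m}$ pairwise distinct characters with the prescribed formulas and checking the multiplicativity. I will handle the two cases $p$ odd and $p=2$ separately because the exponent of the group differs.

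For $p$ odd I would first note that $(0,b,c)^p=(0,pb,pc+\binom{p}{2}b^2)=(0,0,0)$ since $p\mid\binom{p}{2}$ for odd $p$, so $Z(G)$ is elementary abelian. I would then verify directly that $\alpha^{(v,w)}$ of \eqref{vartheta^{u,v}} is a homomorphism by substituting the product formula and using the characteristic-free identity $\tfrac{1}{2}(b+e)^2=\tfrac{1}{2}b^2+\tfrac{1}{2}e^2+be$, which causes the cross term $\psi_v(be)$ to cancel. As $(v,w)$ ranges over $\bF\times\bF$ we obtain $p^{2m}=|Z(G)|$ characters; distinctness follows by first restricting to $\{(0,0,c):c\in\bF\}$ (which gives $\psi_v$ and determines $v$) and then to $\{(0,b,0):b\in\bF\}$ (which determines $w$).

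For $p=2$ I would split $\tup{Lin}(Z(G))$ by the restriction to $G'=\{(0,0,c):c\in\bF\}$. The characters trivial on $G'$ factor through $Z(G)/G'\cong(\bF,+)$ and are therefore precisely the $2^m$ maps $\alpha^v$ of \eqref{eq_alpha}. The characters non-trivial on $G'$ restrict to some $\psi_v$ with $v\in\bF^*$, and since $Z(G)$ is abelian there are $|Z(G)/G'|=2^m$ extensions of each such $\psi_v$, giving $(2^m-1)\cdot2^m$ further characters, for a total of $2^{2m}=|Z(G)|$. It remains to realise these extensions as $\alpha^{(v,w,\epsilon)}$ of \eqref{vartheta^{v,w,e}} and to verify distinctness.

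The main obstacle is checking that $\alpha^{(v,w,\epsilon)}$ is a homomorphism. Writing $u:=v^{2^{m-1}}$ (so $u^2=v$), $ub=tb_0+b_1$, $ue=t'b_0+e_1$, and using $\tup{Tr}_m(vbe)=\tup{Tr}_m((tb_0+b_1)(t'b_0+e_1))$ together with $\tup{Tr}_m(b_0^2)=\tup{Tr}_m(b_0)=1$, the homomorphism condition reduces to the identity
\[
(\epsilon i)^{\kappa(t+t')-\kappa(t)-\kappa(t')}\,(-1)^{Q(b_1+e_1)-Q(b_1)-Q(e_1)+\tup{Tr}_m(b_1e_1)+tt'}=1.
\]
Lemma~\ref{lem_Qx} shows $Q(b_1+e_1)-Q(b_1)-Q(e_1)=\tup{Tr}_m(b_1e_1)$ on $\ker(\tup{Tr}_m)$, killing the middle two contributions, while the only non-trivial remaining case is $t=t'=1$, where $\kappa(t+t')-\kappa(t)-\kappa(t')=-2$, yielding $(\epsilon i)^{-2}(-1)=(-1)(-1)=1$. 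Distinctness is then verified by evaluating at $(0,0,c)$ (which separates $v$), at $(0,b,0)$ with $ub\in\ker(\tup{Tr}_m)$ (which separates $w$ modulo $\bF_2$, since $\psi_w|_{\ker(\tup{Tr}_m)}=\psi_{w'}|_{\ker(\tup{Tr}_m)}$ iff $w-w'\in\bF_2$), and at $(0,b,0)$ with $ub=b_0$ (which yields $\epsilon i$ and separates $\epsilon$). The delicate bookkeeping around the quadratic form $Q$ and the fourth-root-of-unity factor $(\epsilon i)^{\kappa(t)}$ is the part requiring the most care.
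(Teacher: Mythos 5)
Your proposal is correct, and for $p$ odd it is essentially the paper's argument in disguise: the identity $\tfrac{1}{2}(b+e)^2=\tfrac{1}{2}b^2+\tfrac{1}{2}e^2+be$ that makes your multiplicativity check work is exactly the paper's observation that $(0,b,c)=(0,b,\tfrac{1}{2}b^2)(0,0,c-\tfrac{1}{2}b^2)$ splits $Z(G)$ as a direct product of two copies of $(\bF,+)$. For $p=2$ the routes genuinely diverge in organization. The paper \emph{derives} the characters structurally: it fixes $v\in\bF^*$, passes to $Z(G)/H_v$ with $H_v=\{(0,0,c):\tup{Tr}_m(vc)=0\}$, identifies this quotient with an explicit group $K_1\cong C_4\times\bZ_2^{m-1}$ via an isomorphism built from $Q$, and pulls back the characters of $C_4\times\bZ_2^{m-1}$, finishing with the same counting step you use to see that nothing is missed or repeated. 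You instead take the formula \eqref{vartheta^{v,w,e}} as given and \emph{verify} it is a homomorphism by the direct cocycle computation $(\epsilon i)^{\kappa(t+t')-\kappa(t)-\kappa(t')}(-1)^{tt'}=1$ combined with Lemma \ref{lem_Qx}, then separate the characters by evaluating on $\{(0,0,c)\}$, on $(0,b,0)$ with $v^{2^{m-1}}b\in\ker(\tup{Tr}_m)$, and on $(0,b,0)$ with $v^{2^{m-1}}b=b_0$; your bookkeeping here (in particular the cancellation of the $t'\tup{Tr}_m(b_0b_1)+t\tup{Tr}_m(b_0e_1)$ terms and the use of $\tup{Tr}_m(b_0^2)=\tup{Tr}_m(b_0)=1$) checks out. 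The paper's version explains where the fourth root of unity comes from (the $C_4$ factor created by the element $(b_0,0)$ of order $4$ in $K_1$, reflecting that $Z(G)$ has exponent $4$ when $p=2$), while your version is shorter and self-contained once the formula is written down; both hinge on the same quadratic-form identity of Lemma \ref{lem_Qx}.
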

 \begin{proof}
 \begin{enumerate}

  \item When $p$ is odd,  the set $K=\{(0,b,\frac{1}{2}b^2):b\in \bF\}$ forms a subgroup of $Z(G)$ which is isomorphic to  $(\bF,+)$, and $Z(G)$ is the direct product of $K$ and the subgroup $0\times 0\times \bF$. Then for each $(0,b,c)\in Z(G)$, we have
  \[
  (0,b,c)=(0,b,\frac{1}{2}b^2)(0, 0, c-\frac{1}{2}b^2).
  \]
   Thus the linear characters of $Z(G)$ are determined by taking tensors of those of $\bF$.

  \item  When $p=2$, for each $v\in \bF^*$, we denote $H_v=\{(0,0,c):\tup{Tr}_m(vc)=0 \}$. Let $K_1=(\bF\times\bF_2,*)$ be the abelian group with multiplication
\begin{equation*}\label{KK}
  (b,x)*(b',x')=(b+b',x+x'+\tup{Tr}_m(bb')).
\end{equation*}
 Then  we have $K_1\cong C_4\times \bZ_2^{m-1}$ via the isomorphism
\begin{equation}\label{Eqn_tata}
\tau:(t b_0+b_1,x)\mapsto (i^{\kappa(t)+2\kappa(x+Q(b_1)+\tup{Tr}_m(t b_0b_1))},b_1),
\end{equation}
where $t\in\bF_2$, $b_1\in\tup{Ker}(\tup{Tr}_m)$.
 Write $\sqrt{v}=v^{2^{m-1}}$. Then $Z(G)/H_v\cong K_1$ via  the isomorphism \[\sigma:\overline{(0,b,c)}\mapsto(\sqrt{v}b, \tup{Tr}_m(vc)).\]
Thus  we deduce that
  \[
  \tup{Lin}(Z(G)/H_v)=\{\alpha^{(v,w,s)}: w\in \bF/ \bF_2, 0\leq s\leq 3\},
  \]
  where \[
 \alpha^{(v,w,s)}\overline{(0,b,c)}=i^{s \kappa(t)}(-1)^{s(Q(b_1)+\tup{Tr}_m(tb_0b_1)+\tup{Tr}_m(vc))}\psi_w(b_1)\]
  with $\sqrt{v}b=tb_0+b_1$, $t\in\bF_2, b_1\in\tup{Ker}(\tup{Tr}_m)$.

On the other hand, for each $v\in\bF$, let $\alpha^v$ be as in \eqref{eq_alpha}. It is clear that $\alpha^v\in\tup{Lin}(Z(G))$.
Those $\alpha^v$'s are all the linear characters containing $H=\{(0,0,c):c\in\bF\}$ in its kernel.
Observe that when $s=0,2$ the kernel of $\alpha^{(v,w,s)}$'s always contain $H$. So we have
\[\{\alpha^{(v,w,s)}: v,w\in\bF,s=0,2\}\subseteq\{\alpha^v:v\in\bF\}.
\]
 Denote $\alpha^{(v,w,s)}$  for $s=1$,$3$ with $\alpha^{(v,w,\epsilon)}$ for $\epsilon=1,-1$, respectively. Then we obtain a set with no repeated elements \[\{\alpha^{(v,w,\epsilon)}: v\in \bF^*, w\in \bF/\bF_2, \epsilon=\pm1\}\cup\{\alpha^{(w,\epsilon)}: w\in \bF/\bF_2, \epsilon=\pm1\}.\]
Since the size of this set is equal to $|Z(G)|$,  the results then follow.
 \end{enumerate}
 \end{proof}
\begin{lemma}\label{G_v}
The description of the $VZ$-group $\overline{G}$ defined in Lemma \ref{lem_VZ gp} and the size of $Z(\overline{G})$ are given as follows:
\begin{enumerate}
  \item Case $G=A_p(m,\theta)$ with $k>2$.  Set $H_v:=\{(0,b):b\in\textup{Ker}(\psi_v)\}$ for each $v\in\bF^*$. Then  $\overline{G}$ can be expressed as $G_v:=G/H_v$. Moreover,
   \begin{enumerate}
     \item when $k$ is odd, $|Z(G_v)|=p^{n+1}$,
     \item when $k$ is even, $|Z(G_v)|=p$ or $p^{2n+1}$ according as $v\in J_1$ or $v\in J_2$.
   \end{enumerate}
  \item  Case $G=B_p(m,\theta,0)$ with $k>2$. Set $H_v=\{(0,0,c):c\in\textup{Ker}(\psi_v)\}$ for each $v\in\bF^*$. Then $\overline{G}$ can be expressed as $G_v:=G/H_v$. Moreover,
     \begin{enumerate}
       \item when $k$ is odd, $|Z(G_v)|=p^{2n+1}$.
       \item  when $k$ is even, $|Z(G_v)|=p$ or $p^{4n+1}$ according as $v\in J_1$ or $v\in J_2$.
     \end{enumerate}
  \item  Case $G=C_p(m,\theta,0)$ with $k>2$. For each $\alpha=\alpha^{(v,w,s)}\in\tup{Lin}(Z(G))$, let $G_{\alpha}=G/\tup{Ker}(\alpha)$, which is the $VZ$-group $\overline{G}$.
   Besides,
    \begin{enumerate}
      \item when $k$ is odd, $|Z(G_\alpha)|=p^{n+1}$ if $p$ is odd and $|Z(G_\alpha)|=2^{n+2}$ if $p=2$.
      \item when $k$ is even,
      \[ \tup{ for }v\in J_1,\,|Z(G_\alpha)|=
      \begin{cases}
        p, &\tup{ if }p\tup{ is odd},\\
        4,&\tup{ if }p=2,
      \end{cases}\]
     \[\tup{ for }v\in J_2,\, |Z(G_\alpha)|=
      \begin{cases}
         p^{2n+1}, &\tup{ if }p\tup{ is odd},\\
        2^{2n+2},&\tup{ if }p=2.
      \end{cases}
      \]
    \end{enumerate}
  \item   Case $G=D_p(m,\theta,0)$ with $k>2$. Set $H_v=\{(0,0,c)\in G:c\in\textup{Ker}(\psi_v)\}$ for each $v\in\bF^*$.  Then  $\overline{G}$ can be expressed as $G_v:=G/H_v$.
    Moveover,
 \begin{enumerate}
   \item when $k$ is odd, $|Z(G_v)|=p^{2n+1}$,
   \item when $k/2$ is odd,   $|Z(G_v)|=p^{2n+1}$ or $p^{4n+1}$ according as $v\in J_1$ or $v\in J_2$,
   \item when $k/2$ is even,
   \[|Z(G_v)|=\left\{
                \begin{array}{ll}
                 p^{6n+1}, &\tup{ if }v\in J_2\cap S_2,\\
                  p^{2n+1}, &\tup{ if }v\in J_2\cap S_1,\\
                  p^{4n+1},&\tup{ if }v\in J_1\cap S_2,\\
                  p,&\tup{ if }v\in J_1\cap S_1.
                \end{array}
              \right.
 \]
 \end{enumerate}
\end{enumerate}

\end{lemma}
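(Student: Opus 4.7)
\medskip
\noindent\textbf{Proof proposal for Lemma \ref{G_v}.}

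The plan is to exploit the fact (already established in Lemma \ref{lem_VZ gp}) that each quotient $\overline G := G/\ker(\lambda)$ is a VZ-group, so its centre has the clean description
\[
Z(\overline G) = \{\,\overline g \in \overline G : [g,G] \subseteq \ker(\lambda)\,\}.
\]
Writing $C := \{\,g \in G : [g,G]\subseteq\ker(\lambda)\,\}$ for the full preimage, we then have $|Z(\overline G)| = |C|/|\ker(\lambda)|$. Since $\ker(\lambda)$ is an index-$p$ subgroup of $Z(G)$, $|\ker(\lambda)|$ is immediate from Table~\ref{table_Z(G)&G'}, and the entire problem reduces to counting $|C|$ in each case.

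To count $|C|$, the idea is to read off $[g,G]$ from Table~\ref{table_commutator} and translate the containment $[g,G]\subseteq\ker(\lambda)$ into conditions on the coordinates of $g$. For type $A_p(m,\theta)$ with $\lambda=\psi_v$, $\ker(\lambda)$ equals $H_v=\{(0,b):b\in\ker(\psi_v)\}$, and the condition $[g,G]\subseteq H_v$ is $\tup{Im}(f_{a,\theta})\subseteq\ker(\psi_v)$, which by Lemma~\ref{lem_fainv} becomes $va^{1+\theta}\in\bF_\theta$. The number of such $a\in\bF$ is $1$ when no nonzero solution exists (equivalently $v\in J_1$, by the very definition of $J_1$) and otherwise equals $1+|\bF_\theta^*|\cdot|\{\tup{Im}(f_{a,\theta})\subseteq\ker(\psi_v)\}|$; the uniqueness of a codimension-one $\bF_\theta$-subspace of $\bF$ contained in a given $\bF_p$-hyperplane, together with the fibre size $|\bF_{\theta^2}^*|$ from Lemma~\ref{Im(f_a)}(2), reduces this to counting $a$'s with $\tup{Im}(f_{a,\theta})$ fixed, giving $p^n$ or $p^{2n}$ depending on the parity of $k$. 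Multiplying by $p^m/|H_v|=p$ yields the values $p^{n+1}$, $p^{2n+1}$ stated in (1), and $p$ in the case $v\in J_1$.

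For $B_p(m,\theta,0)$ the commutator lives in a direct sum $\tup{Im}(f_{a,\theta})+\tup{Im}(f_{b,\theta})$, and since both summands are $f_{\cdot,\theta}$-images, the two conditions on $a$ and $b$ decouple, so the count is the \emph{square} of the count obtained in the $A$-case, multiplied by $p$. For $D_p(m,\theta,0)$ the commutator involves $f_{a,\theta}$ and $f_{b,\theta^2}$, so one now has to decide when $\tup{Im}(f_{a,\theta})+\tup{Im}(f_{b,\theta^2})\subseteq\ker(\psi_v)$; this splits into independent conditions $v\in J_*$ (for $a$) and $v\in S_*$ (for $b$), and the resulting case distinction $J_i\cap S_j$ produces the four rows in part (4c). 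For $C_p(m,\theta,0)$ the centre is $Z(G)\cong A_p(m,1)$, so the linear characters are parametrised by pairs/triples via Lemma~\ref{C_Z(G)}; one picks the parameter $v$ (the ``third coordinate'' of $\alpha$) and repeats the $A$-type analysis, but now $\ker(\alpha)$ has a larger portion in the $b$-direction of $Z(G)$, which produces the slight shift in the formulas (e.g.\ $2^{n+2}$ versus $p^{n+1}$ for $p=2$ caused by the $C_4$-factor appearing in the isomorphism $\tau$ from \eqref{Eqn_tata}).

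The main technical obstacle is the final case $k/2$ even of $D_p(m,\theta,0)$, where both $J_i$ and $S_j$ contribute four mutually distinct regimes that have to be reconciled with the three-step divisor calculation in Lemma~\ref{lem fa&fb^2}(3), and where one must keep track that $\gcd(p^{2n}+1,p^l+1)$ equals $\gcd(2,p-1)$ in order to identify the correct sizes of the overlap sets. The $C$-type case at $p=2$ also requires care, since then $\tup{Lin}(Z(G))$ is not simply $\widehat{\bF}\times\widehat{\bF}$ but involves the $C_4$-extension from Lemma~\ref{C_Z(G)}(2); the bookkeeping for $\ker(\alpha^{(v,w,\epsilon)})$ must be done explicitly via the $Q$-function of Lemma~\ref{lem_Qx}, but once this is set up, the counting argument is the same as in the other cases.
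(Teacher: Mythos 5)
Your proposal is correct and follows essentially the same route as the paper: both identify $Z(G/\ker\lambda)$ with the image of $\{g\in G: [g,G]\subseteq\ker\lambda\}$ and then count coordinates using Lemmas \ref{Im(f_a)}, \ref{lem_fainv}, \ref{lem_JI&J_2}, \ref{lem fa&fb^2} and \ref{C_Z(G)}, arriving at the same case-by-case values. Two harmless remarks: the description $\overline g\in Z(G/N)\iff [g,G]\subseteq N$ is a general fact about quotients and needs no VZ hypothesis, and the intermediate factor in your type-$A$ count should read $|\bF_{\theta^2}^*|$ rather than $|\bF_\theta^*|$ (which is what you actually use when you conclude $p^n$ or $p^{2n}$ according to the parity of $k$).
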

\begin{proof}
The expression of $\overline{G}$ is deduced directly from its definition given in Lemma \ref{lem_VZ gp}, so we only calculate the size of $Z(\overline{G})$. The case $G=A_p(m,\theta)$ and $ B_p(m,\theta,0)$ are easier, thus we only consider the remaining two cases.
\begin{enumerate}
  \item  Case $G=C_p(m,\theta,0)$. Note that
 $G_\alpha$ is non-abelian if and only if $G'\nsubseteq \tup{Ker}(\alpha)$, that is, $\alpha\neq \alpha^v, \forall v\in\bF$. So for each $\alpha=\alpha^{(v,w,\epsilon)}\in \tup{Lin}(Z(G))$,
  $G_\alpha$ is non-abelian and is a $VZ$-group. By calculation we have
  \begin{equation}\label{Z(G_vw)}
    Z(G_\alpha)=\{\overline{(a,b,c)}: a=0 \tup { or } \tup{Im}(f_{a,\theta})\subseteq \tup{Ker}(\psi_v), b,c\in \bF \}.
  \end{equation}
 Then from Lemma \ref{C_Z(G)} we obtain that if $p$ is odd and $v\neq 0$, $Z(G)/\tup{Ker}(\alpha^{(v,w)})$ has order $p$. On the contrary, if $p=2$ and $v\neq 0$, $Z(G)/\tup{Ker}(\alpha^{(v,w,\epsilon)})$ has order $4$.
 The size of $Z(G_\alpha)$ then follows from Lemma \ref{lem_JI&J_2} and Lemma \ref{Im(f_a)} (2).
  \item Case $G=D_p(m,\theta,0)$.
  It is deduced that \begin{equation*}\label{Z(D_v)}
    Z(G_v)=\{ \overline{(a,b,c)}:   \tup{Im}(f_{a,\theta})+ \tup{Im}(f_{b,\theta^2})\subseteq\tup{Ker}(\psi_v),c\in \bF \}.
  \end{equation*}
 \begin{enumerate}

  \item When $k$ is odd, we have $\bF_{\theta^4}=\bF_{\theta^2}=\bF_\theta$.
   Then $\tup{Im}(f_{b,\theta^2})$ is still an $\bF_\theta$-hyperplane. Thus for each $b\in\bF^*$, \[\tup{Im}(f_{b,\theta^2})=\tup{Im}(f_{a,\theta})\tup{ for some }a\in\bF^*.\] So we have $|Z(G_v)|=p^{2n+1}$.

  \item When $k/2$ is odd, we have $\bF_{\theta^4}=\bF_{\theta^2}=\bF_{p^{2n}}$. According to Lemma $\ref{lem_JI&J_2}\,(3)$, we deduce  that each $\tup{Ker}(\psi_v)$ contains a unique $\tup{Im}(f_{b,\theta^2})$. \
  Then if $v\in J_1$,
  it follows that 
  \[
  \tup{Im}(f_{a,\theta})\nsubseteq \tup{Ker}(\psi_v) \tup{ for any } a\in \bF^*,\,\tup{and } \tup{Im}(f_{b,\theta^2})\subseteq \tup{Ker}(\psi_v)
\tup{ for some }b\in\bF^*.
  \]
     So  from Lemma \ref{Im(f_a)} (2) we have $|Z(G_v)|=p^{2n+1}$.
      On the other hand, if $v\in J_2$, there exist some $a,b\in\bF^*$ such that
      \[\tup{Im}(f_{a,\theta})\subseteq \tup{Ker}(\psi_v)\tup{ and }\tup{Im}(f_{b,\theta^2})\subseteq \tup{Ker}(\psi_v).\]
  It implies that $|Z(G_v)|=p^{4n+1}$.

   \item When $k/2>1$ is even, we have $\bF_{\theta^4}=\bF_{p^{4n}}$ and $\bF_{\theta^2}=\bF_{p^{2n}}$.  By the definitions of $S_i\cap J_j$, $i,j=1,2$, $(c)$ holds directly.
\end{enumerate}
\end{enumerate}
\end{proof}
\begin{lemma}\label{lem_corre}
\begin{enumerate}
  \item In the case $G=A_p(m,\theta), B_p(m,\theta,0),D_p(m,\theta,0)$ with $k>2$, take $T$ to be a set of coset representatives for $\bF_p^*$ in $\bF^*$.
Then every non-linear irreducible character of $G$ can be determined  by lifting a unique non-linear irreducible character  of $G_v$ for some $v\in T$.
  \item In the case $G=C_p(m,\theta, 0)$ with $k>2$, let $I$ be as in \eqref{eq_I,odd} or \eqref{eq_I,2}. Then every non-linear irreducible character of $G$ can be determined  by lifting a unique non-linear irreducible character  of $G_\alpha$ for some $\alpha\in I$.
\end{enumerate}
\end{lemma}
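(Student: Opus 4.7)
The plan is to exploit the fact that each Suzuki $p$-group under consideration has nilpotency class $2$, so $G' \subseteq Z(G)$ and every $\chi \in \tup{Irr}(G)$ carries a well-defined central character $\omega_\chi \in \tup{Lin}(Z(G))$ determined by $\chi(z) = \chi(1)\omega_\chi(z)$ for $z \in Z(G)$. I would begin by establishing that $\chi$ is non-linear if and only if $\omega_\chi|_{G'} \neq 1$: if $\omega_\chi|_{G'} = 1$ then $G' \subseteq \tup{Ker}(\chi)$ forces $\chi$ to factor through the abelian quotient $G/G'$ and hence to be linear, while the converse is immediate. Since central elements act as scalars in every irreducible representation, one also has $\tup{Ker}(\omega_\chi) \subseteq \tup{Ker}(\chi)$, so $\chi$ descends to an irreducible character of $G/\tup{Ker}(\omega_\chi)$; by Lemma \ref{lem_VZ gp}, this quotient is a $VZ$-group whenever $\omega_\chi|_{G'} \neq 1$. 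Lemma \ref{lem_nonlinearchar} then sets up a bijection between non-linear irreducible characters of the quotient and irreducible characters of $G$ whose kernel contains $\tup{Ker}(\omega_\chi)$.

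For part (1), in each of the three families $G' = Z(G) \cong (\bF,+)$, so $\tup{Lin}(Z(G))$ is identified with $\{\psi_v: v \in \bF\}$, and non-linearity of $\chi$ forces $\omega_\chi = \psi_v$ with $v \in \bF^*$. A direct computation shows $\tup{Ker}(\psi_v) = \tup{Ker}(\psi_{cv}) = H_v$ for every $c \in \bF_p^*$, and conversely equality of kernels implies proportionality over $\bF_p$. Hence as $v$ runs over $T$, the subgroups $H_v$ are pairwise distinct and cover every hyperplane kernel arising from a non-trivial $\psi_v$. For fixed $v \in T$, the linear characters of $Z(G)$ trivial on $H_v$ but non-trivial on $G'$ are exactly the $p - 1$ characters $\{\psi_{cv}: c \in \bF_p^*\}$, and through Lemma \ref{lem_nonlinearchar} these match bijectively with the non-linear irreducible characters of $G_v = G/H_v$ via lifting. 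Since every non-linear $\chi \in \tup{Irr}(G)$ has its central character in exactly one such $\bF_p^*$-orbit, the union over $v \in T$ exhausts $\tup{Irr}_{1}(G)$ without repetition.

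Part (2) proceeds along the same lines, but with $Z(G) = \{(0,b,c): b,c \in \bF\} \cong A_p(m,1)$ strictly larger than $G'$. Using Lemma \ref{C_Z(G)} to enumerate $\tup{Lin}(Z(G))$, the condition $\omega_\chi|_{G'} \neq 1$ restricts $\omega_\chi$ to $\alpha^{(v,w)}$ with $v \neq 0$ when $p$ is odd, and to $\alpha^{(v,w,\epsilon)}$ with $v \neq 0$ when $p = 2$. I would then verify directly from formulas \eqref{vartheta^{u,v}} and \eqref{vartheta^{v,w,e}} that $\tup{Ker}(\alpha) = \tup{Ker}(\alpha')$ if and only if $\alpha$ and $\alpha'$ lie in the same equivalence class cut out by the definition of $I$: for odd $p$ this equivalence is the $\bF_p^*$-action $(v,w) \sim (cv,cw)$ whose orbits are projective points in $\tup{PG}(\bF^2)$, reproducing \eqref{eq_I,odd}; for $p = 2$ it is the observation that $\alpha^{(v,w,1)}$ and $\alpha^{(v,w,-1)}$ share the same kernel (their values at $t = 1$ are $\pm i \neq 1$, so both kernels lie inside $\{t = 0\}$, where the two characters coincide), so one may take $\epsilon = 1$ as in \eqref{eq_I,2}. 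The main difficulty I anticipate is precisely this $p = 2$ bookkeeping: reconciling the $C_4 \times \bZ_2^{m-1}$ decomposition supplied by \eqref{Eqn_tata} with the kernel-based parameterization requires careful tracking of the $t = 0$ versus $t = 1$ split in \eqref{vartheta^{v,w,e}}, and checking that $I$ is both a complete and an irredundant set of kernel representatives. Once this is in place, the nilpotency-class-$2$ lifting principle from the first paragraph gives the bijection asserted in the lemma with no further complications.
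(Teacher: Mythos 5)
Your argument is correct, and it takes a genuinely different route from the paper's for the ``existence'' half of the claim. Both proofs handle uniqueness the same way: for distinct $v,v'\in T$ one has $H_vH_{v'}=Z(G)=G'$, so the kernel of a non-linear character can contain at most one $H_v$ (and likewise for the kernels indexed by $I$ in part (2)). For existence, however, the paper argues by counting: it computes $|\tup{Irr}_1(G)|=\tup{k}(G)-|G/G'|$ from \eqref{k(G)&IRR(G)} and the class-number computation of Lemma \ref{k(G)of four types}, computes $\sum_{v\in T}|\tup{Irr}_1(G_v)|$ from Theorem \ref{main theorem} and Lemma \ref{G_v}, and observes that the two totals agree, so the lifts from the various $G_v$ must exhaust $\tup{Irr}_1(G)$. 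You instead get existence directly from the central character: since $G'\le Z(G)$, every $\chi\in\tup{Irr}(G)$ satisfies $\tup{Ker}(\omega_\chi)=Z(G)\cap\tup{Ker}(\chi)\le\tup{Ker}(\chi)$, and for non-linear $\chi$ this subgroup is exactly some $H_v$ (resp.\ $\tup{Ker}(\alpha)$ for a unique $\alpha\in I$). This is shorter, independent of the conjugacy-class count, and pinpoints precisely which quotient each character descends to; what the paper's counting buys in exchange is that it simultaneously produces the multiplicities recorded in the fourth columns of Tables \ref{tableA}--\ref{tableD}, which your route would recover afterwards from Theorem \ref{main theorem} and Lemma \ref{G_v} anyway. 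Two small points: the sentence asserting that the $p-1$ characters $\{\psi_{cv}:c\in\bF_p^*\}$ ``match bijectively with the non-linear irreducible characters of $G_v$'' is misstated --- the correct bijection, which is clearly what you intend, is between $\tup{Irr}_1(G_v)$ and the set of non-linear $\chi\in\tup{Irr}(G)$ whose central character lies in that $\bF_p^*$-orbit. And the $p=2$ bookkeeping you flag in part (2) does close up: the characters of $Z(G)\cong A_2(m,1)$ of order dividing $2$ are exactly those trivial on $G'$ (the squares in $A_2(m,1)$ generate $G'$), so $\omega_\chi|_{G'}\neq1$ forces $\omega_\chi$ to have order $4$, and equality of the index-$4$ kernels then pins $\alpha$ down to the pair $\{\omega_\chi,\omega_\chi^{-1}\}=\{\alpha^{(v,w,1)},\alpha^{(v,w,-1)}\}$, i.e.\ to a unique element of $I$.
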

\begin{proof}
\begin{enumerate}
  \item It  suffices to show that  the kernel of each non-linear irreducible character of $G$ contains exactly a unique $H_v$, $v\in T$, where $H_v$ is as in Lemma \ref{G_v}. We only give the details in the case $G=A_p(m,\theta)$ since in the cases $G=B_p(m,\theta,0)$ and $D_p(m,\theta,0)$ the proof is similar.

       Note that $H_v=H_{v'}$ if and only if $v'=cv$ for some $c\in \bF_p^*$, so we just consider $v\in T$. For $v,v'\in T$, $H_v H_{v'}=Z(G)=G'$ from Table \ref{table_Z(G)&G'}. For each $\chi\in \tup{Irr}(G)$,  $G'\leqslant \tup{Ker}(\chi)$ is equivalent to $\chi\in \tup{Lin}(G)$. So the kernel of each non-linear irreducible character of $G$ contains at most one $H_v$.
  On the other hand,
  we set \[\Delta=\{\chi\in \tup{Irr}_1(G):H_v\leqslant\tup{Ker}(\chi) \tup{ for some } H_v, v\in T\}\subseteq \tup{Irr}_1(G).\]
  If $\Delta=\tup{Irr}_1(G)$, the result then follows.

   Now we claim that $\Delta=\tup{Irr}_1(G)$. We deduce that \[
   |\tup{Irr}_1(G)|=k(G)-|G/G'|=p^n(p^m-1)\]
   from  \eqref{k(G)&IRR(G)} and Table \ref{table_k(G)}.
    On the other hand, by Theorem \ref{main theorem}, there is a correspondence between the set $Y:=\cup_{v\in T}\tup{Irr}_1(G_v)$ and $\Delta$ by associating each element in $Y$ to its lift to $G$. Thus the size of $\Delta$ is equal to that of $Y$.
    
   First we consider the case where $k$ is odd. Since  $G_v$ is a $VZ$-group by Lemma \ref{G_v}, from Theorem \ref{main theorem} and Lemma \ref{G_v} we have
   \[
   |\tup{Irr}_1(G_v)| = |Z(G_v)|-|Z(G_v)/G'_v|= p^{n+1}-p^n= p^n(p-1).
  \]
 Then we conclude that
 \[|\Delta|=|Y|=|T|\cdot |\tup{Irr}_1(G_v)|=\frac{p^m-1}{p-1}p^n(p-1)=|\tup{Irr}_1(G)|,\]
 and the claim holds.
  Similarly, when $k>2$ is even, we deduce from Lemma \ref{G_v} that $|\tup{Irr}_1(G_v)|=p-1$ for each $v\in J_1\cap T$ and $|\tup{Irr}_1(G_v)|=p^{2n}(p-1)$ for each $v\in J_2\cap T$.
  Then
  \begin{small}
  \begin{equation*}
    |\Delta|=|J_1\cap T|(p-1)+|J_2\cap T|p^{2n}(p-1)=\frac{|J_1|}{p-1}(p-1)+\frac{|J_2|}{p-1}p^{2n}(p-1)=|\tup{Irr}_1(G)|,
  \end{equation*}
 \end{small}
  and the claim holds again.
  \item In the case $G=C_p(m,\theta, 0)$, observe  that if $p>2$, $\tup{Ker}(\alpha^{(v,w)})=\tup{Ker}(\alpha^{(v',w')})$ if and only if $v'=sv$ and $w'=sw$ for some $s\in \bF_p^*$. As for  $p=2$,
we have $\tup{Ker}(\alpha^{(v,w,1)})=\tup{Ker}(\alpha^{(v,w,-1)})$. So we consider $\alpha\in I$, where $I$ is given by \eqref{eq_I,odd} or \eqref{eq_I,2}.
The remaining of the proof follows from the same method used in (1).

%

\end{enumerate}
\end{proof}
\begin{remark}\label{sizeZ(A_v)}
\begin{enumerate}
  \item In the case $G=A_p(m,\theta)$,  \cite[Lemma 2.7, (ii)]{Suzuki} failed as $\overline{G}/\tup{ker}(\eta)$ is not extra-special any more  when $p=2$. Hence there is a small error in the non-linear irreducible characters of $A_2(m,\theta)$ with $o(\theta)=k$ odd  listed in \cite[Theorem 2.3 (i)]{Suzuki}.
  \item  In the case $G=C_p(m,\theta,0)$, when $k>1$ is odd and $p=2$, $G_\alpha$ is not isomorphic to $A_p(m,\theta)/H_v$ and hence the proof of  \cite[Theorem 3.2(iii)]{Suzuki} is not true.
\end{enumerate}

\end{remark}
Theorem \ref{thm 1} now follows from Lemma \ref{lem_corre} and Theorem \ref{main theorem} and we have proved our main results.

\section{The expressions of irreducible characters of $G=A_p(m,\theta)$}\label{Sec5}
To give the explicit expressions of the irreducible characters of the Suzuki $p$-group $G$, it remains to determine $\tup{Lin}(G/G')$ and $\tup{Lin}(Z(\overline{G}))$.
In this section, we set $G=A_p(m,\theta)$ and  give $\tup{Lin}(G/G')$ with $k=2$ and $\tup{Lin}(Z(\overline{G}))$ with $k>2$.  As for the remaining three cases, those  can also be obtained by the similar method used in this section. To make this paper more readable and breviate, we do not give the details of them.

\subsection{The case $k=2$}\label{k=2of A}\quad

\begin{prop}\label{prop_Lin(A/A')}
Let $G=A_p(m,\theta)$ with $k=2$. Then
 $\tup{Lin}(G/G')=\{\chi_1^{(v,w)}:v\in\bF,w\in\bF_{p^n}\}$, where
 \begin{equation}\label{eq_Lin(A/A')}
 \chi_1^{(v,w)}\overline{(a,b)}=\psi_v(a)\phi_w(b+b^{p^n}-a^{p^n+1}).
 \end{equation}
Here $\psi_v$ and $\phi_w$ are defined in \eqref{psi_v} and \eqref{phi_w}, respectively.
\end{prop}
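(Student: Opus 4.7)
My plan is to recognize that the formula \eqref{eq_Lin(A/A')} is really the pullback of a character of a quotient that is visibly isomorphic to $(\bF,+)\times(\bF_{p^n},+)$. Accordingly, I will proceed by building an explicit isomorphism and then taking the linear characters of the abelianization through it.

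First, I will define
\[
\Phi:G\to (\bF,+)\times (\bF_{p^n},+),\qquad (a,b)\mapsto\bigl(a,\,b+b^{p^n}-a^{p^n+1}\bigr).
\]
I will check that the second coordinate indeed lies in $\bF_{p^n}$: since $k=2$ we have $a^{p^{2n}}=a$, so $(a^{p^n+1})^{p^n}=a^{p^n+1}$, and $b+b^{p^n}$ is manifestly fixed by $x\mapsto x^{p^n}$. Next I will verify that $\Phi$ is a group homomorphism. Using the multiplication $(a,b)(c,d)=(a+c,b+d+ac^{p^n})$ and expanding $(a+c)^{p^n+1}=a^{p^n+1}+ac^{p^n}+a^{p^n}c+c^{p^n+1}$, the cross term $ac^{p^n}$ contributed by the group law cancels exactly against the cross terms produced by $(b+d+ac^{p^n})^{p^n}-(a+c)^{p^n+1}$, leaving the sum of the individual second coordinates.

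Second, I will determine $\ker(\Phi)$. A pair $(a,b)$ lies in the kernel iff $a=0$ and $b+b^{p^n}=0$, i.e.\ iff $\tup{Tr}_{\bF/\bF_\theta}(b)=0$. By Lemma~\ref{Im(f_a)}(1), this is exactly the condition that $b\in\tup{Im}(f_{1,\theta})$, so $\ker(\Phi)=G'$ by Table~\ref{table_Z(G)&G'}. Comparing orders, $|G/G'|=p^{m+n}=|\bF\times\bF_{p^n}|$, so $\Phi$ descends to an isomorphism $\overline{\Phi}:G/G'\xrightarrow{\sim}\bF\times\bF_{p^n}$.

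Finally, the linear characters of the target are precisely $\{\psi_v\otimes\phi_w:v\in\bF,\,w\in\bF_{p^n}\}$, which is standard from the additive character theory of finite fields (see \cite[Chapter 5]{finite fields}). Pulling back through $\overline{\Phi}$ yields the formula
\[
\chi_1^{(v,w)}\overline{(a,b)}=\psi_v(a)\,\phi_w\bigl(b+b^{p^n}-a^{p^n+1}\bigr),
\]
and these exhaust $\tup{Lin}(G/G')$ by the count. I do not anticipate a genuine obstacle; the only step requiring care is the homomorphism check, where one must track the $p^n$-th power expansions honestly so that the mixed term $ac^{p^n}$ produced by the non-abelian multiplication is absorbed by the correction $-a^{p^n+1}$ in the definition of $\Phi$.
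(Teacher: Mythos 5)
Your proposal is correct and follows essentially the same route as the paper: the map $\Phi(a,b)=(a,\,b+b^{p^n}-a^{p^n+1})$ is exactly the paper's $\rho(\overline{(a,b)})=(a,\,-aa^{\theta}+b+b^{\theta})$ since $a^{\theta}=a^{p^n}$, and in both arguments the kernel is identified with $G'$ via Lemma~\ref{Im(f_a)}(1) and the characters are pulled back through the resulting isomorphism onto $\bF\times\bF_{p^n}$. The only (cosmetic) difference is that you define the map on $G$ and compute its kernel, whereas the paper defines it on $G/G'$ and checks well-definedness plus an order count; your explicit expansion of $(a+c)^{p^n+1}$ usefully fills in the ``direct calculation'' the paper leaves to the reader.
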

\begin{proof}
In this case, $a^{\theta}=a^{p^n}$ for  each $a\in \bF$.
Set  $K:=(\bF,+)\times (\bF_{p^n},+)$, then it is  an abelian group of order $p^{m+n}$.
Define \begin{equation*}
\rho: G/G'\rightarrow K,\, (\overline{a,b})\mapsto (a,-aa^\theta +b+b^{\theta}).
\end{equation*}
We claim that $\rho$ is an isomorphism.
Recall that $G'=\{(0,x):x\in \tup{Im}(f_{1,\theta}) \}$ by Table \ref{table_Z(G)&G'} and  $\tup{Im}(f_{1,\theta})=\{x\in\bF:x^\theta+x=0\}$ by Lemma \ref{Im(f_a)} (1). So we conclude that
 $(\overline{a,b})=(\overline{a,b'})$ if and only if $b-b'\in \tup{Im}(f_{1,\theta})$.  That is to say,
 \[
 (\overline{a,b})=(\overline{a,b'})\tup{ if and only if }(a,-aa^\theta +b+b^{\theta})=(a,-aa^\theta +b'+b'^{\theta}).
 \]
 Thus the map $\rho$ is well defined and injective. Since $|G/G'|=|K|$, we have $\rho$ is an bijection.
By direct calculation,
we obtain that $\rho$ is an isomorphism.
Then the linear characters of $K$ are of the form
\[
\psi_v\otimes\phi_w(x,y)=\psi_v(x)\phi_w(y),
\]
so the linear characters of $G/G'$  follow from the isomorphism $\rho$.

\end{proof}
\begin{remark}
  There is a mistake about the linear characters of $G$ in \cite[Theorem 2.3 (ii)]{Suzuki}, and the correct version can be deduced  from the above proposition directly.
\end{remark}

\subsection{The case $k>2$ }\label{kodd of A}\quad

\begin{lemma}\label{lem_kerpsiv1}
  Suppose that either $k$ is odd and $p=2$, or $k$ is even. If $v\in(a^{\theta+1})^{-1}\bF_{\theta}^*$, then there is  $a_v\in\bF^*$ such  that $va_v^{\theta+1}=1$.
\end{lemma}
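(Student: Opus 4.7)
The plan is to reformulate the existence of $a_v$ with $va_v^{\theta+1}=1$ as the statement that $v^{-1}$ lies in the image of the power map $y\mapsto y^{\theta+1}$, and then to verify this inclusion using the earlier gcd computation. Writing $\theta:x\mapsto x^{p^l}$ with $n=\gcd(l,m)$, the image $N:=\{y^{\theta+1}:y\in\bF^*\}=\{y^{p^l+1}:y\in\bF^*\}$ is a multiplicative subgroup of $\bF^*$ of order $(p^m-1)/\gcd(p^l+1,p^m-1)$. Finding $a_v$ amounts to showing $v^{-1}\in N$.

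The hypothesis $v\in (a^{\theta+1})^{-1}\bF_\theta^*$ for some $a\in\bF^*$ is exactly the condition $v\in J_2$ established in Lemma \ref{lem_JI&J_2}, where $J_2=\{xy^{1+\theta}:x\in\bF_\theta^*,\,y\in\bF^*\}$ is a multiplicative subgroup of $\bF^*$. Hence I need to verify $J_2^{-1}\subseteq N$, and since $J_2$ is a subgroup, this reduces to $J_2\subseteq N$ (or equivalently $N=J_2$ by cardinality).

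I split into the two cases of the hypothesis and apply Lemma \ref{lem(p^l+1,p^m-1)}. When $k$ is odd and $p=2$, the gcd is $1$, so $N=\bF^*$ and the inclusion $v^{-1}\in N$ is automatic. When $k$ is even, the gcd equals $p^n+1$, so $|N|=(p^m-1)/(p^n+1)$, and Lemma \ref{lem_JI&J_2} gives $|J_2|=(p^m-1)/(p^n+1)$ as well. Taking $x=1$ in the description of $J_2$ shows $N\subseteq J_2$, and the equality of cardinalities forces $N=J_2$; inverting, $v^{-1}\in J_2=N$.

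There is no substantial obstacle: the lemma is a direct consequence of the index calculation of $\gcd(p^l+1,p^m-1)$ together with the group-theoretic description of $J_2$ already extracted in Section~2. The only subtlety worth flagging is why the remaining case ($k$ odd and $p$ odd) is excluded: there $\gcd(p^l+1,p^m-1)=2$, so $[ \bF^* : N]=2$ while $J_2=\bF^*$, and the conclusion genuinely fails for those $v$ not lying in the subgroup of squares $\cdot$ twisted norms.
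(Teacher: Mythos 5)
Your proposal is correct and takes essentially the same route as the paper's proof: both reduce the claim to a statement about the image $N=\{y^{\theta+1}:y\in\bF^*\}$ of the norm-like power map inside the cyclic group $\bF^*$, and settle it by the order computation of Lemma \ref{lem(p^l+1,p^m-1)}. The only cosmetic difference is that the paper phrases the key inclusion as $\bF_\theta^*\subseteq N$ (and then sets $a_v=ay^{-1}$ where $y^{\theta+1}=va^{\theta+1}$), whereas you phrase it as $J_2=N$ via a cardinality comparison; these are equivalent since $N\subseteq J_2=\bF_\theta^*\cdot N$.
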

\begin{proof}
Write $b:=va^{\theta+1}$ and suppose that $b\in\bF_\theta^*$.
We need to show that there is $y\in\bF^*$ such that $y^{\theta+1}=b$. So that $a_v:=ay^{-1}$ is a desired element.
It suffices to show
\[
\{y^{\theta+1}: y\in\bF^*\}\supseteq\bF_{p^n}^*.
\]
 Observe that the left hand side is a multiplicative subgroup of $\bF^*$ of order $\frac{p^m-1}{\gcd(p^l+1,p^m-1)}$, and the right hand side is a subgroup of order $p^n-1$. Write $d=\gcd(p^l+1,p^m-1)$.
Then it suffices to show
\[
p^n-1 \mid \frac{p^m-1}{d}, \, i.e. \,\,  d(p^n-1)\mid p^m-1.
\]
From Lemma \ref{lem(p^l+1,p^m-1)}, $d(p^n-1)\mid p^m-1$ if and only if  $k$ is odd and $p=2$, or $k$ is even, and the proof is completed.

\end{proof}

\subsubsection{ The case $k>2$, $k$ odd}\qquad


  In the case $p=2$, we fix an element $u_0\in\bF_{2^n}$ such that $\tup{Tr}_{n}(u_0)=1$ and define
  \[U=\{x\in \bF_{2^n}: \tup{Tr}_n(x)=0\}.
   \]
   Then $\bF_{2^n}=U\oplus\bF_2 u_0$.  For an element $v\in\bF^*$, let $H_v$ be as in Lemma \ref{G_v} and set $G_v=G/H_v$.
  By Lemma \ref{lem_kerpsiv1}, there exists an element $a_v\in\bF^*$ such that $va_v^{\theta+1}=1$.
 Then  we conclude that
 \begin{equation}\label{Z(G_v)of k odd}
   Z(G_v)=\{\overline{(a_v(\delta u_0+u_1),b)}: \delta\in \bF_2, u_1\in U,b\in\bF\}.
 \end{equation}

  In the case that $p$ is odd, take $\gamma$ to be a primitive element of $\bF^*$.  Then $\langle \gamma^{p^l+1}\rangle$  is a subgroup of $ \bF^*$ of order
  \[(p^m-1)/\gcd(p^l+1,p^m-1)=(p^m-1)/2\]
   by Lemma \ref{lem(p^l+1,p^m-1)}, so it is the set of squares of $\bF^*$.
  For each $v\in \bF^*$, let $H_v$ be as in Lemma \ref{G_v} and set $G_v=G/H_v$.

  \begin{lemma}\label{lem_xv}
    For each $v\in \bF^*$, the set  $\{va^{\theta+1}:a\in\bF^*,\tup{Im}(f_{a,\theta})\subseteq \tup{Ker}(\psi_v)\}$ is the set of squares or non-squares of $\bF_{p^n}^*$, according as $v$ is  a square  of $\bF^*$ or not.
  \end{lemma}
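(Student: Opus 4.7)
The plan is to translate the defining condition via Lemma \ref{lem_fainv}, which tells us that $\tup{Im}(f_{a,\theta})\subseteq\tup{Ker}(\psi_v)$ is equivalent to $va^{\theta+1}\in\bF_\theta=\bF_{p^n}$. Hence the set in question can be rewritten as
\[
\{va^{\theta+1}:a\in\bF^*\}\cap\bF_{p^n}^{*} \;=\; \bigl(v\cdot\{a^{p^{l}+1}:a\in\bF^*\}\bigr)\cap\bF_{p^n}^{*},
\]
so the problem reduces to understanding the image of $a\mapsto a^{p^{l}+1}$ inside $\bF^*$ and then intersecting the coset $v\cdot\mathrm{Im}$ with $\bF_{p^n}^{*}$.

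For the image: since $k$ is odd and $p$ is odd, Lemma \ref{lem(p^l+1,p^m-1)} gives $\gcd(p^{l}+1,p^{m}-1)=2$, so $\{a^{p^{l}+1}:a\in\bF^*\}$ is the unique index-$2$ subgroup of $\bF^*$, namely the set $(\bF^*)^{2}$ of squares. Consequently, $v\cdot\{a^{\theta+1}\}$ equals $(\bF^*)^{2}$ when $v$ is a square and equals $\bF^*\setminus(\bF^*)^{2}$ otherwise.

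It remains to prove $(\bF^*)^{2}\cap\bF_{p^n}^{*}=(\bF_{p^n}^{*})^{2}$; both sides are then of order $(p^{n}-1)/2$ and the two cosets in $\bF_{p^n}^{*}$ correspond precisely to squares and non-squares of $\bF_{p^n}^{*}$. The inclusion $(\bF_{p^n}^{*})^{2}\subseteq(\bF^*)^{2}\cap\bF_{p^n}^{*}$ is obvious. For equality I will use that
\[
\bigl|(\bF^*)^{2}\cap\bF_{p^n}^{*}\bigr|=\gcd\!\bigl(p^{n}-1,\,(p^{m}-1)/2\bigr),
\]
and observe that $(p^{m}-1)/(p^{n}-1)=1+p^{n}+\cdots+p^{(k-1)n}$ is a sum of $k$ odd integers, hence odd (here is where oddness of $k$ is used). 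Writing $(p^{m}-1)/2=\tfrac{p^{n}-1}{2}\cdot\tfrac{p^{m}-1}{p^{n}-1}$ with the second factor odd gives $\gcd(p^{n}-1,(p^{m}-1)/2)=(p^{n}-1)/2$, matching $|(\bF_{p^n}^{*})^{2}|$ and forcing equality.

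Putting the pieces together: if $v\in(\bF^*)^{2}$, then the set equals $(\bF^*)^{2}\cap\bF_{p^n}^{*}=(\bF_{p^n}^{*})^{2}$; if $v\notin(\bF^*)^{2}$, the set equals $(\bF^*\setminus(\bF^*)^{2})\cap\bF_{p^n}^{*}=\bF_{p^n}^{*}\setminus(\bF_{p^n}^{*})^{2}$, i.e.\ the non-squares of $\bF_{p^n}^{*}$. The main (modest) obstacle is the parity argument showing $(\bF^*)^{2}\cap\bF_{p^n}^{*}$ has exactly half the elements of $\bF_{p^n}^{*}$; everything else is a routine unpacking of Lemma \ref{lem_fainv} and Lemma \ref{lem(p^l+1,p^m-1)}.
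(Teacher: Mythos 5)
Your proof is correct, and it reaches the conclusion by a route that differs from the paper's in its middle step. Both arguments open the same way, translating the containment $\tup{Im}(f_{a,\theta})\subseteq\tup{Ker}(\psi_v)$ into $va^{\theta+1}\in\bF_{p^n}^*$ via Lemma \ref{lem_fainv} and invoking Lemma \ref{lem(p^l+1,p^m-1)} to identify $\{a^{\theta+1}:a\in\bF^*\}$ with the squares of $\bF^*$. The paper then fixes one admissible $a_0$ (whose existence comes from $J_2=\bF^*$ for $k$ odd) and uses Lemma \ref{Im(f_a)}(2) to parametrize all admissible $a$ as $a_0u$ with $u\in\bF_{p^n}^*$, so that the set becomes the single coset $va_0^{\theta+1}\{u^2:u\in\bF_{p^n}^*\}$, after which one only has to decide when $va_0^{\theta+1}$ is a square of $\bF_{p^n}^*$. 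You instead express the set globally as $\bigl(v(\bF^*)^2\bigr)\cap\bF_{p^n}^*$ and prove the identity $(\bF^*)^2\cap\bF_{p^n}^*=(\bF_{p^n}^*)^2$ by a gcd computation, whose crux is that $(p^m-1)/(p^n-1)$ is odd when $k$ is odd. Your version bypasses Lemma \ref{Im(f_a)}(2) and the existence of a distinguished $a_0$ entirely, and it isolates explicitly a fact (squares of $\bF^*$ meet $\bF_{p^n}^*$ exactly in the squares of $\bF_{p^n}^*$) that the paper leaves implicit; the cost is the extra parity/gcd argument, which is elementary and which you carry out correctly. Both proofs are sound and of comparable length.
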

   \begin{proof}
   Fix an $a_0\in\bF^*$ such that $\tup{Im}(f_{a_0,\theta})\subseteq \tup{Ker}(\psi_v)$. We have
     \[
    S:= \{va^{\theta+1}:a\in\bF^*,\tup{Im}(f_{a,\theta})\subseteq \tup{Ker}(\psi_v)\}= \{v(a_0u)^{\theta+1}:u\in\bF_{p^n}^*\}
     =va_0^{\theta+1}\{u^2:u\in \bF_\theta^*\},
     \]
     where the second equality is deduced from Lemma $\ref{Im(f_a)} \,(2)$.
     From Lemma \ref{lem_fainv}, we know that $va_0^{\theta+1}\in\bF_\theta^*$.
     So $S$ is the set of squares or non-squares of $\bF_{p^n}^*$, according as $va_0^{\theta+1}$ is a square of $\bF_{p^n}^*$ or not.
    Thus  $va_0^{\theta+1}$ is a square of $\bF_{p^n}^*$ if and only if
     $va^{\theta+1}=1$  for some $a\in\bF^*$.
      That is  $v=(a^{-1})^{\theta+1}\in\langle \gamma^{p^l+1}\rangle$, the set of squares of $\bF^*$.
    So the claim is proved.
   \end{proof}
   Let $x_0$ be a fixed non-square in $\bF_{p^n}^*$.  From Lemma \ref{lem_xv}, there is $a_v$ such that $va_v^{\theta+1}=1$ if $v$ is a square of $\bF^*$. If $v$ is a non-square of $\bF^*$, there is $a_v$ such that $va_v^{\theta+1}=x_0$.
Then we express $Z(G_v)$ as follows:
  \begin{equation}\label{Z(G_v)}
    Z(G_v)=\{ \overline{(a_vu,b)}: u\in\bF_{p^n}, b\in \bF \}.
  \end{equation}
  Besides, set
  \begin{equation}\label{eq_xv}
    x_v=
  \begin{cases}
    \,1, &\tup{ if } v \tup{ is a square of }\bF^*,\\
   \, x_0, &\tup{ if } v \tup{ is a non-square of }\bF^*.
  \end{cases}
  \end{equation}
\begin{lemma}\label{lem_ZGB2}
Take notation as above and assume that $k>2$ is odd. Let $\psi_v,\phi_w$ be as in \eqref{psi_v} and \eqref{phi_w}, respectively. Then the following holds:
\begin{itemize}
  \item[(1)] When $p=2$,
$\tup{Lin}(Z(G_v))=\{\lambda^{(v,w,s)}:w\in\bF_{2^n}/\bF_2,  0\leq s\leq3\}$, where
\begin{equation}\label{IrrZ(G_v)1}
  \lambda^{(v,w,s)}:(\overline{a_v(u_1+\delta u_0),b})\longmapsto {i}^{s\kappa(\delta)}(-1)^{s(Q(u_1)+\tup{Tr}_n(\delta u_0u_1))}\phi_w(u_1)\psi_{sv}(b).
\end{equation}
Here  
 $\kappa$ is defined in \eqref{kk} and $Q$ is the function from $\bF_{2^n}$ to $\bF_2$ defined in Lemma \ref{lem_Qx}.
\item[(2)] When $p$ is odd,
$\tup{Lin}(Z(G_v))=\{\lambda^{(v,w,s)}:w\in\bF_{p^n},  0\leq s\leq p-1\}$ where
\begin{equation}\label{IrrZ(G_v)2}
  \lambda^{(v,w,s)}:(\overline{a_vu,b})\longmapsto\xi_p^{-\frac{1}{2}s\tup{Tr}_m(x_vu^2)}\phi_w(u)\psi_{sv}(b).
\end{equation}
\end{itemize}

\begin{proof}
Let $K=(\bF_{p^n}\times\bF_p,*)$ be the group with multiplication $*$ defined by
\begin{equation*}\label{K}
  (u,x)*(u',x')=(u+u',x+x'+\tup{Tr}_m(va_v^{\theta+1} uu')).
\end{equation*}
Then $Z(G_v)$ is isomorphic to  $K$ under the isomorphism:
\begin{equation}\label{Eqn_sigma}
\sigma:(\overline{a_vu,b})\mapsto (u,\tup{Tr}_m(vb)), \forall \,(\overline{a_vu,b})\in Z(G_v).
\end{equation}
\begin{enumerate}

\item When $p=2$,  we have $va_v^{\theta+1}=1$.
Let
 $ N=\{(u_1,Q(u_1)):u_1\in U\}$.
 Then the set $N$ forms a subgroup of $K$, since $Q(u_1+u_1')=Q(u_1)+Q(u_1')+\Tr_n(u_1u_1')$ for any $u_1,u_1'\in U$ and
 \[
 \tup{Tr}_m(uu')=\tup{Tr}_n(\tup{Tr}_{m/n}(uu'))=\tup{Tr}_n(kuu')=\tup{Tr}_n(uu') \tup{ for any }u,u'\in\bF_{2^n}.
 \]
 For any $(u,x)=(\delta u_0+u_1,x)\in K$, we have the decomposition
\begin{equation*}\label{decom of K}
 (u,x)=(u_1,Q(u_1))*(\delta u_0,x+Q(u_1)+\tup{Tr}_n(\delta u_0u_1)),
\end{equation*}
where $(u_1,Q(u_1))\in N$, and
\[ (\delta u_0,x+Q(u_1)+\tup{Tr}_n(\delta u_0u_1))=(u_0,0)^{\kappa(\delta)+2(x+Q(u_1)+\tup{Tr}_n(\delta u_0u_1))}\in \langle(u_0,0)\rangle.\]
It implies that $ K=N\times \langle(u_0,0)\rangle$. So its characters are determined by taking the tensors of those of $N$ and  $\langle(u_0,0)\rangle$.
 Therefore, the irreducible characters of $Z(G_v)$ follow from the isomorphism $\sigma$ given in \eqref{Eqn_sigma}.
 Here we observe that for  $\phi_w,\phi_{w'}\in\tup{Irr}(\bF_{2^n},+)$,  $\phi_{w}|_U=\phi_{w'}|_U$ if and only if $w\in w'+\bF_2$.
Thus we chose $w\in\bF_{2^n}/\bF_2$.

 \item When $p$ is odd,
Let
 \[N=\{(u,\frac{1}{2}\Tr_m(x_vu^2)):u\in \bF_{p^n}\},\]
which is a subgroup of $K$. Besides, we have $K=N\times (0\times \bF_p)$ under the decomposition

\[(u,x)=(u,\frac{1}{2}\Tr_m(x_vu^2))*(0,x-\frac{1}{2}\Tr_m(x_vu^2)),\] where $(u,x)\in K$.
    Similarly, the claim follows again via the isomorphism $\sigma$ given in \eqref{Eqn_sigma}.
\end{enumerate}
\end{proof}
\end{lemma}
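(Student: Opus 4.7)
The plan is to realize $Z(G_v)$ as an explicit abelian group $K=(\bF_{p^n}\times\bF_p,\ast)$ with the twisted multiplication defined in the proof sketch, then exhibit a direct-product decomposition of $K$ and lift the characters back via an isomorphism $\sigma$.

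First, I would verify that
\[
\sigma: Z(G_v)\longrightarrow K,\qquad \overline{(a_vu,b)}\longmapsto(u,\tup{Tr}_m(vb))
\]
is a well-defined group isomorphism. Well-definedness on the quotient requires that $b-b'\in\tup{Ker}(\psi_v)$ map to the same element, which is immediate since $\tup{Tr}_m(v(b-b'))=0$ precisely when $b-b'\in\tup{Ker}(\psi_v)$. That $\sigma$ is a homomorphism follows from a short computation using $[(a_vu,b),(a_vu',b')]=(0,f_{a_v u,\theta}(a_v u'))$ and the identity $\tup{Tr}_m(v\,a_v u\,(a_vu')^\theta)=\tup{Tr}_m(va_v^{\theta+1}uu'^\theta)=\tup{Tr}_m(va_v^{\theta+1}uu')$, where the last step uses $va_v^{\theta+1}\in\bF_\theta$ together with $\tup{Tr}_{m/n}$-compatibility. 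Injectivity and a cardinality count against \eqref{Z(G_v)of k odd}--\eqref{Z(G_v)} finish this step.

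Next I would decompose $K$. In the odd-$p$ case, set $N=\{(u,\tfrac12\tup{Tr}_m(x_vu^2)):u\in\bF_{p^n}\}$ with $x_v=va_v^{\theta+1}$ as in \eqref{eq_xv}; a direct expansion of $\tfrac12\tup{Tr}_m(x_v(u+u')^2)$ against the twisted product shows $N\leqslant K$, and because its second coordinate is determined by the first, $N\cap(0\times\bF_p)=\{0\}$, giving $K=N\times(0\times\bF_p)$. Each character of $K$ is then $\phi_w\otimes\psi_s$ (viewed through the projection onto the two factors), and pulling back through $\sigma$ produces the formula \eqref{IrrZ(G_v)2}. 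For $p=2$, the analogous subgroup is $N=\{(u_1,Q(u_1)):u_1\in U\}$; the subgroup property is the one nontrivial point and is exactly where Lemma \ref{lem_Qx} is used, since $Q(u_1+u_1')=Q(u_1)+Q(u_1')+\tup{Tr}_n(u_1u_1')$ on $U$ together with $\tup{Tr}_m(uu')=\tup{Tr}_n(uu')$ for $u,u'\in\bF_{2^n}$ (using $k$ odd so $\tup{Tr}_{m/n}$ acts as multiplication by $k\equiv 1\pmod 2$). Then every $(u,x)=(\delta u_0+u_1,x)\in K$ factors as $(u_1,Q(u_1))\ast(u_0,0)^{\kappa(\delta)+2(x+Q(u_1)+\tup{Tr}_n(\delta u_0u_1))}$, yielding $K=N\times\langle(u_0,0)\rangle\cong(\bZ_2)^{n-1}\times \bZ_4$ and the characters assemble to \eqref{IrrZ(G_v)1}.

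The last issue is the parameterization: in the $p=2$ case, since the first coordinate of $N$ only ranges over $U=\tup{Ker}(\tup{Tr}_n)$, two characters $\phi_w,\phi_{w'}$ of $\bF_{2^n}$ restrict identically to $U$ iff $w-w'\in\bF_2u_0^\perp$; using $\tup{Tr}_n(u_0)=1$ this is exactly $w\equiv w'\pmod{\bF_2}$, so we must index by $w\in\bF_{2^n}/\bF_2$, matching the statement. A final count $|\tup{Lin}(Z(G_v))|=|Z(G_v)|$ in both cases confirms we have all linear characters.

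The main obstacle I expect is the $p=2$ subgroup verification for $N$: one must invoke Lemma \ref{lem_Qx} correctly and control $\tup{Tr}_m$ versus $\tup{Tr}_n$ via the hypothesis that $k$ is odd, since otherwise the identification $\tup{Tr}_m(uu')=\tup{Tr}_n(uu')$ on $\bF_{2^n}$ fails and the cocycle identity for $Q$ no longer matches the twist. Everything else reduces to bookkeeping through the isomorphism $\sigma$.
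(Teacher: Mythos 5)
Your proposal is correct and follows essentially the same route as the paper: the same isomorphism $\sigma$ onto $K=(\bF_{p^n}\times\bF_p,*)$, the same direct-product decompositions $K=N\times\langle(u_0,0)\rangle$ (via Lemma \ref{lem_Qx}) for $p=2$ and $K=N\times(0\times\bF_p)$ for $p$ odd, and the same observation that $\phi_w|_U$ depends only on $w\bmod\bF_2$. The only difference is that you spell out the well-definedness and homomorphism checks for $\sigma$, which the paper leaves implicit.
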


\subsubsection{ The case $k>2$, $k$ even}\quad

In this case, $\theta^2$ has order $k/2$, and $\bF_{\theta^2}=\bF_{p^{2n}}$, where recall that $\bF_\theta=\bF_{p^n}$.
Choose $j\in \bF_{p^{2n}}$ such that $j+j^{p^n}=1$. Then $\bF_{p^{2n}}=\bF_{p^n}\oplus j\bF_{p^n}$. Let $J_1, J_2$ be as in Lemma \ref{lem_JI&J_2}.
For $v\in J_1$, we have $ Z(G_v)=G'_v$ has order $p$ by Lemma \ref{G_v}. For $v\in J_2$, there exists $a_v\in\bF^*$ such that $va_v^{\theta+1}=1$ by  Lemma \ref{lem_kerpsiv1} and Lemma \ref{lem_fainv},
 so we have \[
 Z(G_v)=\{\overline{(a_vu,b)}:u\in \bF_{p^{2n}},b\in\bF\}.
 \]

Similar to Lemma \ref{lem_ZGB2}, we get the following lemma.
\begin{lemma}\label{lem_ZGB3}
Let $G=A_p(m,\theta)$ with $o(\theta)=k>2$ even and take notations as above. For each $v\in \bF^*$, let $H_v$ be as in Lemma \ref{G_v} and set $G_v:=G/H_v$. Then the following holds:
\begin{itemize}
  \item[(1)] For $v\in J_1$, we have
\[\tup{Lin}(Z(G_v))=\{\lambda^{(v,s)}:  0\leq s\leq p-1\},\] where
 $ \lambda^{(v,s)}(\overline{0,b})= \psi_{sv}(b).$
\item[(2)]For $v\in J_2$, 
\begin{itemize}
  \item[(i)] if $p=2$,
$\tup{Lin}(Z(G_v))=\{\lambda^{(v,w_1,w_2,s)}:w_1,w_2\in\bF_{p^n},  s=0,1\}$, where
\begin{equation}\label{IrrZ(G_v)222}
  \lambda^{(v,w_1,w_2,s)}:(\overline{a_v(u_1+ju_2),b})\longmapsto\phi_{w_1}(u_1)\phi_{w_2}(u_2)(-1)^{s(\tup{Tr}_m(vb)+\tup{Tr}_m(ju_1u_2))},
  \end{equation}
  \item[(ii)]if $p$ is odd,
$\tup{Lin}(Z(G_v))=\{\lambda^{(v,w,s)}:w\in\bF_{p^{2n}},  0\leq s\leq p-1\}$, where
\begin{equation}\label{IrrZ(G_v)22}
  \lambda^{(v,w,s)}:(\overline{a_vu,b})\longmapsto\phi_{w}(u)\xi_p^{s(\tup{Tr}_m(vb)-\frac{1}{2}\tup{Tr}_m(u^{\theta+1}))}. \end{equation}
\end{itemize}
\end{itemize}
\end{lemma}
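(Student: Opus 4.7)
My plan is to adapt the strategy used in Lemma \ref{lem_ZGB2}, building an explicit isomorphism between $Z(G_v)$ and a suitable central extension of $\bF_p$ by $\bF_{p^{2n}}$, then decomposing it to read off the linear characters.

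First, for Case (1) ($v\in J_1$) the result is essentially immediate: by Lemma \ref{G_v} the group $Z(G_v)=G_v'$ has order $p$, so it is cyclic and its linear characters must be the $p$ distinct restrictions of $\psi_v$ to $\{\overline{(0,b)}\}$, which gives exactly the family $\lambda^{(v,s)}$ listed in the statement.

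For Case (2) ($v\in J_2$), I would proceed as follows. Using the $a_v$ provided by Lemma \ref{lem_kerpsiv1}, define
\[
\sigma\colon Z(G_v)\to K,\qquad \overline{(a_vu,b)}\longmapsto\bigl(u,\tup{Tr}_m(vb)\bigr),
\]
where $K=(\bF_{p^{2n}}\times\bF_p,\ast)$ has the multiplication
\[
(u,x)\ast(u',x')=\bigl(u+u',\;x+x'+\tup{Tr}_m(uu'^{\theta})\bigr).
\]
The computation showing that $\sigma$ is a well-defined isomorphism is the direct analogue of the one in Lemma \ref{lem_ZGB2}, using the commutator formula of Table \ref{table_commutator}, the identity $va_v^{\theta+1}=1$, and the fact that $\theta$ restricts to the order-$2$ automorphism $x\mapsto x^{p^n}$ of $\bF_{p^{2n}}$. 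It then suffices to determine $\tup{Lin}(K)$.

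The heart of the proof is to exhibit an abelian-group decomposition $K=N\times(\{0\}\times\bF_p)$. For $p$ odd (part (2)(ii)) I will take
\[
N=\bigl\{\bigl(u,\tfrac{1}{2}\tup{Tr}_m(u^{\theta+1})\bigr):u\in\bF_{p^{2n}}\bigr\},
\]
and verify it is a subgroup via the identity
\[
\tup{Tr}_m\bigl((u+u')^{\theta+1}\bigr)=\tup{Tr}_m(u^{\theta+1})+\tup{Tr}_m(u'^{\theta+1})+2\,\tup{Tr}_m(uu'^{\theta}),
\]
which uses the crucial symmetry $\tup{Tr}_m(u^{\theta}u')=\tup{Tr}_m(uu'^{\theta})$ (valid because $\theta^2$ fixes $\bF_{p^{2n}}$). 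Once $K\cong N\times\bF_p\cong\bF_{p^{2n}}\times\bF_p$, tensoring the characters $\phi_w$ of $\bF_{p^{2n}}$ with the additive characters of $\bF_p$ and pulling back through $\sigma$ yields the stated formula $\lambda^{(v,w,s)}$ in \eqref{IrrZ(G_v)22}.

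The main obstacle is part (2)(i), where $p=2$ and the factor $\tfrac{1}{2}$ above is unavailable; this forces a different splitting. Using the basis $\bF_{p^{2n}}=\bF_{p^n}\oplus j\bF_{p^n}$ and writing $u=u_1+ju_2$, I will try the correction
\[
N=\bigl\{\bigl(u_1+ju_2,\tup{Tr}_m(ju_1u_2)\bigr):u_1,u_2\in\bF_{p^n}\bigr\}.
\]
The key identity to prove is
\[
\tup{Tr}_m(uu'^{\theta})\equiv\tup{Tr}_m\bigl(j(u_1u_2'+u_1'u_2)\bigr)\pmod 2,
\]
which, after expanding $u'^{\theta}=u_1'+(1+j)u_2'$ and collecting terms, reduces to showing that $\tup{Tr}_m$ kills $u_1u_1'$, $u_1u_2'$ (both in $\bF_{p^n}$ with $k$ even) and $(j+j^2)u_2u_2'$ (which lies in $\bF_{p^n}$ since $j+j^2=j\cdot j^{2^n}$ is $\theta$-invariant, so its trace is again a multiple of $2$). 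Once this cocycle identity is confirmed, $N$ is a subgroup, $K=N\times\langle(0,1)\rangle$, and the characters of $N\cong\bF_{p^n}\times\bF_{p^n}$ are parametrized by pairs $(w_1,w_2)$, giving \eqref{IrrZ(G_v)222} after pulling back via $\sigma$. The remaining bookkeeping (counting $p\cdot p^{2n}$ characters and matching $|Z(G_v)|=p^{2n+1}$ from Lemma \ref{G_v}) then confirms that no characters are missed.
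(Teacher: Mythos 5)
Your proposal is correct and follows essentially the same route as the paper: the same isomorphism $\sigma$ onto the same group $K=(\bF_{p^{2n}}\times\bF_p,\ast)$, the same complement $N=\{(u,\tfrac12\tup{Tr}_m(u^{\theta+1}))\}$ for $p$ odd, and for $p=2$ your single subgroup $N=\{(u_1+ju_2,\tup{Tr}_m(ju_1u_2))\}$ is exactly the product $N_1N_2$ of the paper's two subgroups $\{(u_1,0)\}$ and $\{(ju_2,0)\}$, with your cocycle identity amounting to the paper's computations that $\tup{Tr}_m$ vanishes on $\bF_{2^n}$ and that $\tup{Tr}_m(u_1(ju_2)^\theta)=\tup{Tr}_m(ju_1u_2)$. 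The verifications you sketch (the symmetry $\tup{Tr}_m(u^\theta u')=\tup{Tr}_m(uu'^\theta)$ on $\bF_{p^{2n}}$, and $j+j^2=j^{2^n+1}\in\bF_{2^n}$) all check out, so no gaps.
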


\begin{proof}
Here we only give the proof of $v\in J_2$, and the case $v\in J_1$ is easier and omitted.
Let $K=(\bF_{p^{2n}}\times\bF_p,*)$ be the group with multiplication $*$ defined by
\begin{equation*}
  (u,x)*(u',x')=(u+u',x+x'+\tup{Tr}_m(uu'^\theta)).
\end{equation*}
 Then $Z(G_v)$ is isomorphic to  $K$ under the isomorphism:
\begin{equation*}
\sigma:(\overline{a_vu,b})\mapsto (u,\tup{Tr}_m(vb)), \tup{ for all }(\overline{a_vu,b})\in Z(G_v).
\end{equation*}
\begin{enumerate}

 \item When $p=2$, for $x,x'\in\bF_{2^n}$, we have
\[
\Tr_m(xx'^\theta)=\Tr_n(\Tr_{m/n}(xx'))=k\Tr_n(xx')=0,\]
and
\[\Tr_m(jx (jx')^\theta)=\Tr_m(j^{\theta+1}x x')=\Tr_n(\Tr_{m/n}(j^{\theta+1}xx'))=k\Tr_n(j^{\theta+1}xx')=0.
\]
  Thus $N_1:=\{(u_1,0):u_1\in \bF_{2^n}\}$ and $N_2:=\{(ju_2,0):u_2\in \bF_{2^n}\}$ are  subgroups of $K$. Set $N_3=:0\times\bF_p$. Then $K$ is the direct product of $N_1,N_2,N_3$ and so its characters are determined by taking the tensors of those of  $N_1,N_2,N_3$.
  For $u_1,u_2\in\bF_{2^n}$, set $x:=u_1(ju_2)^\theta$. We have
  \[
  \Tr_m(u_1(ju_2)^\theta)+ \Tr_m(u_1 ju_2)=\Tr_m(x+x^\theta)=\Tr_n(\Tr_{m/n}(x+x^\theta))=k\Tr_n(x+x^\theta)=0.
  \]
  Thus $\Tr_m(u_1(ju_2)^\theta)= \Tr_m(u_1ju_2)$.
   Then for $(u,x)\in K$, we have the following decomposition:
 \[
 (u,x)=(u_1+ju_2,x)=(u_1,0)*(ju_2,0)*(0,x+\Tr_m(ju_1u_2)).
 \]
Therefore the claim follows.

 \item
When $p$ is odd,  let $N=\{(u,\frac{1}{2}\Tr_m(u^{\theta+1})): u\in \bF_{p^{2n}}\}$. Then $K=N\times (0\times \bF_p)$ and the decomposition is as follows:
\[
(u,x)=(u,\frac{1}{2}\Tr_m(u^{\theta+1}))*(0, x-\frac{1}{2}\Tr_m(u^{\theta+1})).\]
So the claim holds.
\end{enumerate}
\end{proof}

\end{document}